\documentclass[a4paper,12pt]{article}
\usepackage[left=2.8 cm,top=2.2cm,bottom=3.2cm,right=2.8cm]{geometry}
\usepackage{amsfonts,amssymb,amsthm,amsmath}
\usepackage{mathrsfs}
\usepackage{bbm}
\usepackage{fontspec}

\usepackage{xcolor}

\definecolor{green(munsell)}{rgb}{0.0, 0.66, 0.47}
\definecolor{BlueGreenn}{rgb}{0.3,0.5,0.8}
\definecolor{DB}{rgb}{0.3,0.3,0.3}
\definecolor{DOr}{rgb}{0.7,0.3,0.3}
\definecolor{DGr}{rgb}{0.3,0.7,0.3}
\definecolor{DBl}{rgb}{0.1,0.3,0.5}
\definecolor{arylideyellow}{rgb}{0.91, 0.84, 0.42}
\definecolor{burntorange}{rgb}{0.8, 0.33, 0.0}
\definecolor{chromeyellow}{rgb}{1.0, 0.65, 0.0}
\definecolor{niceyellow}{RGB}{255,204,0}
\definecolor{lightgray}{rgb}{0.96,0.96,0.96}
\definecolor{darkgreen}{rgb}{0.1,0.45,0.06}
\definecolor{darkred}{rgb}{0.65,0.1,0.1}
\definecolor{lightred}{rgb}{1,0.9,0.9}
\definecolor{darkblue}{rgb}{0.1,0.1,0.5}
\definecolor{lightblue}{rgb}{0.88,0.92,0.99}
\definecolor{lightorange}{rgb}{1,0.95,0.89}
\definecolor{lightyellow}{rgb}{1,1,0.87}
\definecolor{burntlightorange}{RGB}{245,235,211}
\definecolor{skde}{RGB}{0,80,158}
\definecolor{skdelight}{RGB}{198,215,231}
\definecolor{skdeverylight}{RGB}{228,245,251}
\definecolor{skdelys}{RGB}{122,178,220}
\definecolor{pink}{RGB}{232,88,208}

\usepackage[hypertexnames=true, citecolor = burntorange, colorlinks = true, linkcolor = BlueGreenn, pdffitwindow = false, urlbordercolor = white,pdfborder={0 0 0}]{hyperref}%

\usepackage[T1]{fontenc}
\usepackage{microtype}

\usepackage{color}
\usepackage[utf8]{inputenc}

\usepackage{framed}
\usepackage{centernot}

\usepackage{tikz-cd}
\usepackage{tkz-euclide}

\usepackage[shortlabels]{enumitem}
\usepackage{dirtytalk}
\usepackage{subcaption}
\usepackage{float}
\usepackage{physics}
\usepackage{tcolorbox}
\usepackage{tikz}
\usetikzlibrary{
  math,
  bbox,
  knots,
  hobby,
  decorations.pathreplacing,
  shapes.geometric,
  calc
}

\usepackage{comment}

\usepackage{pgfplots}
\pgfplotsset{compat=1.18} 

\usepackage{mathtools}
\mathtoolsset{showonlyrefs=true} 

\usepackage{ytableau}
\ytableausetup{boxsize=0.5em,centertableaux}

\usepackage[normalem]{ulem}

\setlist{nolistsep}
\usepackage{titlesec}
\titleformat{\section}{\large\bfseries\color{DBl}}{\thesection}{1em}{}
\titleformat{\subsection}[runin]
{\normalfont\normalsize\bfseries\color{DBl}}{\thesubsection}{0.5em}{}[.]

\titlespacing\subsection{0pt}{12pt plus 4pt minus 2pt}{4pt plus 2pt minus 2pt}

\makeatletter
\patchcmd{\@maketitle}
  {\@title}
  {\color{\@titlecolor}\@title}
  {}{}
\newcommand{\@titlecolor}{DBl}
\newcommand{\titlecolor}[1]{\renewcommand{\@titlecolor}{#1}}
\makeatother

\numberwithin{equation}{section}

\newtheorem{theorem}{Theorem}[section]
\newtheorem{proposition}[theorem]{Proposition}
\newtheorem{lemma}[theorem]{Lemma}

\newtheorem{corollary}[theorem]{Corollary}

\theoremstyle{definition}
\newtheorem{definition}[theorem]{Definition}

\newcommand{\TL}{\text{TL}}

\newcommand{\ot}{\otimes}
\newcommand{\tp}[1]{^{\otimes #1}}    

\newcommand{\col}{\,:\,}

\usepackage{amsmath}

\DeclareMathOperator{\End}{End}

\DeclareMathOperator{\id}{id}

\DeclareMathOperator{\re}{Re}

\DeclareMathOperator{\diag}{diag}

\newcommand{\Cl}{\mathbb{C}}

\newcommand{\Nl}{\mathbb{N}}

\newcommand{\Ql}{{\mathbb Q}}
\newcommand{\Tl}{{\mathbb T}}

\newcommand{\R}{\mathcal{R}}

\newcommand{\CI}[0]{\mathcal{I}} 
 \newcommand{\CL}[0]{\mathcal{L}}
 \newcommand{\CN}[0]{\mathcal{N}}
 
 \newcommand{\CR}[0]{\mathcal{R}}

\newcommand{\la}{\lambda}

\DeclareMathAlphabet\EuScript{U}{eus}{m}{n}

\usepackage[style=alphabetic, labelnumber, defernumbers=true, firstinits=true, maxbibnames=99, url=false, doi=true, isbn=false, eprint=false, backend=biber]{biblatex}
\renewbibmacro{in:}{}
\DeclareFieldFormat{pages}{#1}
\ytableausetup{boxsize=0.5em,centertableaux}

\newcommand{\bT}{\mathbb{T}}
\usepackage{scalerel, tikz}

\renewcommand{\boxtimes}{\mathbin{\scalerel*{\tikz{\draw[line width=1.1pt](0,0)rectangle(1,1)--(0,0)(1,0)--(0,1);}}{\otimes}}}

\title{\bfseries The classification problem for unitary R-Matrices with two eigenvalues}

\author{Gandalf Lechner\footnote{Dep. Mathematik, FAU Erlangen-N\"urnberg, Cauerstr. 11, 91058 Erlangen. {\ttfamily gandalf.lechner@fau.de}}}
\date{\small\today}

\begin{document}
	
\maketitle

\begin{abstract}
    The problem of classifying all unitary R-matrices of arbitrary finite dimension that have precisely two distinct eigenvalues is described, working up to a natural equivalence relation given by the characters of their braid group representations. Up to one class that might or might not exist in even dimension larger than two, a full classification theorem is obtained.
\end{abstract}

\section{Introduction}

The (quantum) Yang-Baxter Equation (YBE) is an algebraic nonlinear equation for a linear map $R:V\ot V\to V\ot V$ on the tensor square of a vector space $V$, namely
\begin{align}\label{eq:YBE}
    (R\ot1)(1\ot R)(R\ot1)=(1\ot R)(R\ot 1)(1\ot R)
\end{align}
as an equation in $\End(V\ot V\ot V)$, where $1$ denotes $\id_V$. This equation plays a role in remarkably many different fields in mathematics and mathematical physics, such as braid group representations \cite{Jones:1987}, knot theory \cite{Turaev:1988}, subfactors \cite{Wenzl:1990}, quantum groups \cite{ChariPressley:1994}, Hopf algebras \cite{AndruskiewitschGrana:2003}, statistical mechanics \cite{Baxter:1982}, quantum field theory \cite{AbdallaAbdallaRothe:2001}, and quantum computing \cite{RowellWang:2017}. It also has several interesting variants, such as the classical Yang-Baxter equation \cite{BurbanKreussler:2012} or the set-theoretic Yang-Baxter equation \cite{CedoVendramin:2026}.

This widespread appearance of the YBE provides of course ample interest in understanding the solutions of \eqref{eq:YBE}, and has triggered approaches using various different tools ranging from algebraic and categorical methods over combinatorics to operator-algebraic ideas. Naive direct attempts to solve it run into the problem that \eqref{eq:YBE} amounts to a system of $(\dim V)^6$ cubic equations in $(\dim V)^4$ variables, which is not manageable even for low $\dim V>2$. From a more general point of view, the structure of the solution set of the YBE -- either for fixed $V$ or for a class of vector spaces -- is largely unknown.

One therefore typically restricts attention to subfamilies of solutions, such as solutions of dimension two \cite{Hietarinta:1992_8}, solutions representing the Temperley-Lieb algebra \cite{Bytsko:2015}, charge-conserving solutions \cite{HietarintaMartinRowell:2024}, set-theoretic solutions \cite{CedoVendramin:2026}, or unitary and involutive solutions \cite{LechnerPennigWood:2019}.

Motivated by applications in quantum physics \cite{RowellWang:2012,AlazzawiLechner:2016,RowellWang:2017,CorreaDaSilvaLechner:2023}, we focus in this article on \emph{unitary} solutions: We require that $V\cong\Cl^d$ is a finite-dimensional Hilbert space and $R:V\ot V\to V\ot V$ is unitary w.r.t. the canonical scalar product on $V\ot V$, i.e. we may view $R\in\End(V\ot V)\cong M_{d^2}$ as a unitary $(d^2\times d^2)$-matrix, where we use the standard identification $M_d\ot M_d\cong M_{d^2}$, and $M_d$ denotes the ${}^*$-algebra of complex $(d\times d)$-matrices. Such solutions of \eqref{eq:YBE} will be called \emph{unitary R-matrices}, and we shall write $\CR(V)$ for the set of all unitary matrices with base space $V$, and $\CR=\bigcup_V\CR(V)$ for the union over all finite-dimensional Hilbert spaces~$V$. We will refer to $d:=\dim V$ as the \emph{dimension of $R$} (although $R$ is an operator on $\Cl^{d^2}$), and write $\dim R:=\dim V$.

A classification of $\CR(V)$ for general $V$ is out of sight. We will therefore identify R-matrices which are identical as far as their braid group representations are concerned. To define this equivalence relation, we use the standard notation
\begin{align}\label{eq:Rk}
    R_k := 1\tp{(k-1)}\ot R\ot1\tp{(n-k-1)} \in\End(V\tp{n})
    ,\qquad 1\leq k\leq n-1,
\end{align}
(the total number $n$ of tensor factors will be clear from context). In case $R$ is invertible, we obtain representations of the braid groups $B_n$, $n\in\Nl$: Recalling that $B_n$ is presented on $n-1$ generators $b_1,\ldots,b_{n-1}$ by the relations
\begin{align}\label{eq:BnRelations}
    \begin{aligned}
        b_kb_l&=b_lb_k,\qquad &|k-l|&>1
        ,\\
        b_kb_lb_k&=b_lb_kb_l,\qquad &|k-l|&=1,
    \end{aligned}
\end{align}
one easily checks that the maps
\begin{align}
    \rho_R^{(n)} : B_n \to \End(V\tp{n}),\qquad b_k\mapsto R_k,
\end{align}
uniquely extend to $B_n$-representations on $V\tp{n}$. In case $R$ is unitary (i.e. $R\in\CR(V)$), this representation is of course unitary.

\begin{definition}
    Two unitary R-matrices $R,S\in\CR$ are \emph{equivalent}, denoted $R\sim S$, if for every $n\in\Nl$, the representations $\rho_R^{(n)}\cong\rho_S^{(n)}$ are unitarily equivalent.
\end{definition}

This definition focusses on the braid group representation aspects of R-matrices. Dropping the unitarity assumption, it also makes sense for invertible R-matrices (also on infinite-dimensional spaces), and we refer to \cite{AndruskiewitschGrana:2003,AlazzawiLechner:2016,LechnerPennigWood:2019} for some appearances of it in the literature.

Note that $\dim R$ is an invariant (i.e. $R\sim S\Rightarrow\dim R=\dim S$), and the spectrum $\sigma(R)$ of $R$ (including multiplicities) is also an invariant. This follows by considering the unitary equivalences $\rho_R^{(1)}\cong\rho_S^{(1)}$ and $\rho_R^{(2)}\cong\rho_S^{(2)}$ for two equivalent R-matrices $R\sim S$. The \emph{partial trace} of $R$ is another important invariant that will be recalled in the next section.

We also note that for $u:V\to V$ unitary and $R\in\CR(V)$, one has
\begin{align}\label{eq:exampleequivalences}
    \begin{aligned}
        &(u\ot u)R(u^*\ot u^*) \in \CR(V),\qquad &(u\ot u)R(u^*\ot u^*)&\sim R,
        \\
        [u\ot u,R]=0
        \Rightarrow
        &(1\ot u)R(1\ot u^*) \in \CR(V),\qquad &(1\ot u)R(1\ot u^*)&\sim R.
    \end{aligned}
\end{align}
To prove these claims, one checks that in the first case, the tensor power $u^{\ot n}$ is a unitary intertwiner for $\rho_R^{(n)}\cong\rho_{(u\ot u)R(u^*\ot u^*)}^{(n)}$, and in the second case, $u\ot u^2\ot\ldots\ot u^n$ is a unitary intertwiner for $\rho_R^{(n)}\cong\rho_{(1\ot u)R(1\ot u^*)}^{(n)}$ \cite{ContiLechner:2019}.

\bigskip

The challenging general problem of classifying $\CR/\!\!\sim$ is currently open. It is however possible to restrict to subfamilies of $\CR$ that are compatible with $\sim$, for instance by restricting the dimension or the spectrum of $R$. The family of all unitary R-matrics of dimension $\dim R=2$ can be completely classified up to equivalence, as will be demonstrated at the end of this section. Already for $\dim V=3$, no comparable classification result is known (see, however, \cite{HietarintaMartinRowell:2024}, for a classification of the charge-conserving R-matrices in dimension three). Part of the problem is, as seen in Proposition~\ref{proposition:2d}, R-matrices with various different spectra and different cardinalities of spectra appear, which leads to many equivalence classes.

This observation suggests that instead of prescribing the dimension, one might consider R-matrices with prescribed spectrum. Apart from the trivial case $\sigma(R)=\{q\}$ of R-matrices $R=q\cdot1$ that are multiples of the identity, the classification is understood in the family of all unitary R-matrices with spectrum $\sigma(R)\subset\{1,-1\}$, i.e. the unitary \emph{involutive} R-matrices (of arbitrary dimension). In this case, the representations $\rho_R^{(n)}$ factor through the symmetric group $S_n$. The involutive unitary R-matrices have been classified in \cite{LechnerPennigWood:2019} by using extremal characters of the infinite symmetric group. One finds an explicit bijection with the set of pairs of Young diagrams, and for each equivalence class, a representative can be specified.

\bigskip

In this paper, we go one step further and describe the more general classification problem for unitary R-matrices $R$ with two eigenvalues, i.e. $|\sigma(R)|=2$. This family includes all unitary Yang-Baxter representations of the Temperley-Lieb and Hecke algebras, underlying the Jones polynomial. As we will see, it contains non-involutive elements which can be seen as ``non-classical deformations'' of symmetries (involutive R-matrices).

Without the restriction to unitary R-matrices, this family has been considered by Gurevich \cite{Gurevich:1993} and Ram \cite{Ram:1991}. The restriction to unitary R-matrices has however a quite effect. As described by Rowell and Wang \cite{RowellWang:2012} (see also \cite{DelaneyRowellWang:2016}), most R-matrices are not unitary, and unitary local (in the sense of tensor products) braid group representations are rare. As unitary R-matrices with two eigenvalues can also be described in terms of Hecke or Temperley-Lieb algebras, one may equivalently study the classification question in terms of certain representations of these algebras. In a concrete matrix setting, this question has been addressed by Bytsko \cite{Bytsko:2015,Bytsko:2022}.

In contrast, we will here use the subfactor framework developed with Conti in \cite{ContiLechner:2019}. We will recall the relevant notions in Section~\ref{section:framework}, focussing in particular on Markov traces. There we also recall a family of unitary R-matrices known as \emph{Gaussian} R-matrices, going back to work of Goldschmidt and Jones \cite{GoldschmidtJones:1989,Jones:1989,Jones:1989_2} and later generalized by Galindo and Rowell \cite{GalindoRowell:2013}.

Whereas Section~\ref{section:framework} still applies to arbitrary unitary R-matrices, we specialize to R-matrices with two eigenvalues in Section~\ref{section:hecke}, fixing without loss of generality the spectrum to be
\begin{align}
    \sigma(R)=\{-1,q\}, \qquad q\in\bT\setminus\{-1\}.
\end{align}
As the involutive family, corresponding to $q=1$, has already been classified and behaves very differently from $q\neq1$, we restrict to the non-involutive case $q\neq\pm1$.

Applying the results from the general framework, we show that the equivalence classes of unitary non-involutive R-matrices with two eigenvalues are labelled by three data: The eigenvalue~$q$, the normalized trace $\eta\in(0,1)$ of the spectral projection corresponding to eigenvalue $-1$, and the dimension $d\in\Nl$. These three data $q,\eta,d$ are not independent, and the task is to understand which triples $(q,\eta,d)$ are realized by R-matrices.

In this context, a main result is Theorem~\ref{thm:class1}, which builds on Wenzl's work on Hecke algebra subfactors \cite{Wenzl:1988_2} and a rationality property of braid group characters (Proposition~\ref{proposition:tauRelementary}), and establishes strong constraints on the possible triples $(q,\eta,d)$: Only eight families of equivalence classes, each with fixed $q$ and $\eta$, but varying dimension $d$, remain. In particular, these classes are empty unless $q\in\{i,-i,e^{i\pi/3},e^{-i\pi/3}\}$. When restricting to the Temperley-Lieb subfamily, this recovers a theorem of Bytsko \cite{Bytsko:2019}.

By exploiting Temperley-Lieb structure and symmetries, these eight families can be described by just three families. We then show in Proposition~\ref{thm:unfinished} that two of these three families can be exhausted by Gaussian R-matrices and tensor products with identities. The last family of classes, corresponding to $q=e^{i\pi/3}$, $\eta=\frac12$, and arbitrary even dimension $d$, is not yet understood. We show that it is empty for $d=2$, but do not have a proof that this remains true for $d>2$. R-matrices of this type would need to represent the Hecke algebra but not the Temperley-Lieb algebra, and correspond to an unkown localization of a specific braid group representation \cite{GalindoHongRowell:2011}.

Some of our results were already presented in the Oberwolfach proceedings \cite{Lechner:2026}.

\bigskip
\bigskip

To conclude the introduction and for later reference, we describe the simple classification of unitary R-matrices of dimension two.

\begin{proposition}\label{proposition:2d}
    Every unitary R-matrix $R$ with dimension $\dim R=2$ is equivalent to precisely one of the following matrices\footnote{Here, and only in this proposition and its proof, the notation $R_1,\ldots,R_4$ refers to the specified matrices and not to \eqref{eq:Rk}.} (with the usual identification $M_2\ot M_2\cong M_4$):
    \begin{alignat}{3}
    R_1&=q\cdot1,&\qquad q&\in\bT,
    \\
    R_2
    &=
    \left(
        \begin{array}{cccc}
        p\\
        &&q\\
        &q\\
        &&&s
        \end{array}
    \right),
    &\qquad
    q&\in\bT,\quad \{p,s\}\in\bT^2,\quad p\neq s,
    \\
    R_3
    &=
    \left(
        \begin{array}{cccc}
        &&&q\\
        &p\\
        &&p\\
        q
        \end{array}
    \right),
    &\qquad p,q&\in\bT,
    \\
    R_4
    &=
    \frac{q}{\sqrt{2}}
            \left(
                \begin{array}{rrrr}
                1 & 1 \\
                -1 & 1 \\
                &&1&-1\\
                &&1 & 1
                \end{array}
            \right),
            &\qquad
            q&\in\bT,
            \label{R4}
    \end{alignat}
    with uniquely fixed parameters $q,p$ or unordered parameter pairs $\{p,s\}$.
\end{proposition}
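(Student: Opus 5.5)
We would build this on Hietarinta's classification of all invertible $4\times4$ solutions of the YBE, up to the standard symmetries (conjugation by $u\otimes u$, transposition, flip conjugation, scaling) \cite{Hietarinta:1992_8}. Since these symmetries are not all unitary, we first reduce to a unitary normal form. The list of unitary solutions is known to fall into a few families. Up to $(u\ot u)\,\cdot\,(u^*\ot u^*)$ with $u\in U(2)$, which preserves $\sim$ by \eqref{eq:exampleequivalences}, these are: diagonal-type (``charge-conserving'') solutions of the form $\diag(p,\cdot,\cdot,s)$ with a $2\times2$ middle block; anti-diagonal/permutation-twisted solutions; and the Gaussian-type solution $R_4$. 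The plan has two parts. First, we show every unitary $2$-dimensional R-matrix is equivalent to one of $R_1,\ldots,R_4$ (existence). Second, we separate these by invariants (uniqueness).

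\textbf{Existence.} We run through Hietarinta's list and impose unitarity. In each family we then use the two moves in \eqref{eq:exampleequivalences} to normalise parameters.

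In the charge-conserving family, the middle block is a unitary solving the YBE. Conjugating by $1\ot u$ with $u$ diagonal commutes with $u\ot u$, so it removes the phases of the off-diagonal entries. One is left either with $q\cdot F$ on the middle block (where $F$ is the flip), giving $R_2$ or $R_1$ when $p=s=q$ after conjugation, or with a diagonal middle block. In the latter case the YBE forces the middle entries to be related to $p,s$. The case $p=s$ with the flip restricted to the middle block reduces, via a $u\ot u$ basis change, to $R_1$ or $R_3$.

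The anti-diagonal solutions yield $R_3$, after conjugating with $\diag(1,e^{i\theta})$ to equalise the phases of the two anti-diagonal corners.

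The non-charge-conserving solutions of Hietarinta's list which are unitary yield, after rescaling, the Gaussian matrix $R_4$.

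The main obstacle is carrying out this unitary reduction honestly. Hietarinta's equivalences use $GL_2$ and transpositions, so we must check that unitarity plus the YBE force the relevant intertwiner to be unitary, or else directly re-solve the YBE under unitarity in each case. For this we would use that unitary $u$ and the polar decomposition of any $g\in GL_2$ with $(g\ot g)R(g\ot g)^{-1}$ unitary can be replaced by a unitary one.

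\textbf{Uniqueness.} We separate the classes using invariants of $\sim$. The spectrum of $R$ with multiplicities is available from $\rho^{(2)}$, and the trace of $\rho^{(3)}(b_1b_2)$ is available from $\rho^{(3)}$.

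The spectra are:
\begin{itemize}
\item $R_1$: $\{q^4\}$, with all four eigenvalues equal.
\item $R_2$: $\{p,s,q,-q\}$.
\item $R_3$: $\{p,p,q,-q\}$.
\item $R_4$: $\{q e^{\pm i\pi/4}\}$, each with multiplicity $2$.
\end{itemize}

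These spectra already recover most parameters. For $R_2$ they recover $q$ up to sign, so we additionally use the partial trace, or equivalently $\operatorname{tr}\rho^{(3)}(b_1b_2)$, to fix $q$ and the pair $\{p,s\}$. The spectrum also distinguishes $R_2$ from $R_3$: for $R_2$ we have $p\neq s$, which is detectable except when $\{p,s\}$ coincides with $\{q,-q\}$. Those residual coincidences, including $R_3$ versus $R_4$ when the spectra agree, are settled by comparing $\operatorname{tr}(R_1R_2)$ on $V^{\ot3}$, a straightforward computation. The parameters are then unique because the invariants determine them.
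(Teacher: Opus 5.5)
Your existence half follows essentially the paper's route. The paper imports the list of unitary $2$-dimensional solutions from \cite{ContiLechner:2019} (which rests on Hietarinta and Dye), normalizes the parameters with the two moves in \eqref{eq:exampleequivalences}, and separates the four families by spectra. The genuine gap is in the parameter uniqueness: your step that fixes the sign of $q$ in $R_2$ ``by the partial trace, or equivalently $\operatorname{tr}\rho^{(3)}(b_1b_2)$'' fails, because neither quantity depends on $q$. Write $R_2(q;p,s)$ and $R_3(q;p)$ for the listed matrices. Then $R_2$ acts monomially, $e_i\ot e_j\mapsto c_{ij}\,e_j\ot e_i$ with $c_{11}=p$, $c_{22}=s$, $c_{12}=c_{21}=q$. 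Hence $\phi(R_2)=\frac12\diag(p,s)$ and $\Tr\rho^{(3)}_{R_2}(b_1b_2)=\sum_i c_{ii}^2=p^2+s^2$; similarly $\phi(R_3)=\frac p2\cdot 1$. No cleverer invariant can repair this, because the claim is false. For $u=\diag(1,-1)$ one has $[u\ot u,R_2]=0$ and $(1\ot u)R_2(q;p,s)(1\ot u^*)=R_2(-q;p,s)$. For $u=\diag(1,i)$ one gets $(u\ot u)R_3(q;p)(u^*\ot u^*)=R_3(-q;p)$. So by \eqref{eq:exampleequivalences}, $R_2(q;p,s)\sim R_2(-q;p,s)$ and $R_3(q;p)\sim R_3(-q;p)$; this is exactly the sign ambiguity of $\sqrt{qr}$ in the normalization step. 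Thus for $R_2$ and $R_3$ only $q^2$ is an invariant, and the uniqueness clause of the statement holds only modulo $q\mapsto -q$ there. The paper's proof does not catch this either: it separates the four families by spectra and notes the $p\leftrightarrow s$ swap, but never argues that distinct parameters give inequivalent matrices. What your invariants do prove is the corrected statement. The spectrum together with $\operatorname{tr}\rho^{(3)}(b_1b_2)$ determines $q$ for $R_1$ and $R_4$, $p$ and $q^2$ for $R_3$, and $\{p,s\}$ and $q^2$ for $R_2$.

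There are also smaller slips. The spectrum of $R_1$ is $\{q\}$ (fourfold), not $\{q^4\}$; as written you could not recover $q$. In the existence part, $R_2$ with $p=s$ (in particular $q$ times the flip) always has the opposite pair $\pm q$ in its spectrum, so it can never reduce to $R_1$; it is equivalent to $R_3(q;p)$. For $p\neq -q$ this cannot come from a conjugation by some $u\ot u$. Such a conjugation fixes $e_1\ot e_2-e_2\ot e_1$ up to a phase, and that vector is an eigenvector of $R_2$ for $-q$ but of $R_3$ for $p$. You need the twisted move $(1\ot u)\,\cdot\,(1\ot u^*)$ with the off-diagonal $u$ used in the paper. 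Finally, the ``residual coincidences'' you plan to settle via $\operatorname{tr}(R_1R_2)$ never occur. $R_2$ with $p\neq s$ and $R_3$ never have the same spectrum with multiplicities, and $R_4$ has no opposite pair. So, as in the paper, the spectrum alone separates the four families.
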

\begin{proof}
    Building on earlier work by Hietarinta and Dye \cite{Hietarinta:1992_8,Dye:2003_2}, it was shown in \cite{ContiLechner:2019}, that every unitary R-matrix is equivalent to one of the R-matrices listed below:
    \begin{alignat}{3}
    R_1&=q\cdot1,
    & \sigma(R_1)&=\{q\},
    \\
    R_2
    &=
    \left(
        \begin{array}{cccc}
        p\\
        &&q\\
        &r\\
        &&&s
        \end{array}
    \right),
    &
    \sigma(R_2)&=\{p,s,\sqrt{qr},-\sqrt{qr}\},
    \\
    R_3
    &=
    \left(
        \begin{array}{cccc}
        &&&q\\
        &p\\
        &&p\\
        r
        \end{array}
    \right),
    &
    \sigma(R_3)&=\{p,\sqrt{qr},-\sqrt{qr}\},
    \\
    R_4
    &=
    \frac{q}{\sqrt{2}}
            \left(
                \begin{array}{rrrr}
                1 & 1 \\
                -1 & 1 \\
                &&1&-1\\
                &&1 & 1
                \end{array}
            \right),
            &\qquad
            \sigma(R_4)&=\left\{\tfrac{q(1+i)}{\sqrt{2}},\tfrac{q(1-i)}{\sqrt{2}}\right\}.
            \label{R4}
    \end{alignat}
    To proceed from this fact to the similar but stronger statement of the proposition, we need to show that we may restrict to the more specific parameter choices given in the statement of the proposition, and that no two distinct R-matrices as listed there are equivalent.

    We first consider the spectra of this R-matrices, displayed above. Clearly $R_1$ is not unitarily equivalent (and in particular, not equivalent) to any of the other R-matrices, and $R_4$ is not unitarily equivalent to $R_2$ or $R_3$ because the latter two have a pair of opposite eigenvalues $\pm\la$, which $R_4$ does not have. To compare $R_2$ and $R_3$, we use the equivalences \eqref{eq:exampleequivalences}. Consider the unitary diagonal $(2\times2)$-matrix $u=\diag((r/q)^{1/4},1)$, where $q,r$ are the parameters of $R_3$. One computes that $(u\ot u)R_3(u^*\ot u^*)$ differs from $R_3$ only by replacing $r$ and $q$ by $\sqrt{rq}$. As $(u\ot u)R_3(u^*\ot u^*)\sim R_3$, we may restrict to parameters $q=r$ in $R_3$ up to equivalence.

    Similarly, one checks that $R_2$ commutes with tensor squares $u\ot u$ of diagonal unitary $(2\times2)$-matrices. Hence $R_2\sim(1\ot u)R_2(1\ot u^*)$, and as $(1\ot u)R_2(1\ot u^*)$ has the same form as $R_2$, one verifies that $u$ can be chosen to match $q$ and $r$. Hence also in $R_2$, we may restrict to $r=q$ up to equivalence.

    For $p\neq s$ in $R_2$, the matrices $R_2$ and $R_3$ are not unitarily equivalent (2 vs. 3 eigenvalues). For $p=s$, they are unitarily equivalent (with identified parameters), and in fact even $R_2\sim R_3$ in this case. To prove that, we consider the unitary $u=\genfrac{(}{)}{0pt}{1}{0\;\;a}{1\;\;0}$ with $a=\sqrt{p/q}$. By straightforward computation, one checks that $u\ot u$ commutes with $R_3$, and that $R_3\sim (1\ot u)R_3(1\ot u^*)=R_2$.

    The equivalence class of $R_2$ depends only on $q$ and the unordered pair $\{p,s\}$ because conjugating $R_2$ with the tensor square of $\genfrac{(}{)}{0pt}{1}{0\;\;1}{1\;\;0}$ swaps $p$ and $s$.
\end{proof}

\section{R-matrices, subfactors, and Markov traces}\label{section:framework}

Before entering the discussion of R-matrices with two eigenvalues, we derive a few general results that will be helpful later. To begin with, we recall the subfactor approach to unitary R-matrices developed in \cite{ContiLechner:2019}.

Given $R\in\CR(V)$, we write $d:=\dim V$ for its dimension and consider the infinite tensor product of matrix algebras $M_d\cong B(V)$, namely $\CN=\bigotimes_{n\in\Nl}M_d$, considered as a von Neumann algebra that is weakly closed w.r.t. the topology induced by the normalized trace $\tau$. Here $\tau$ is defined on finite tensor products as
\begin{align}
    \tau(A_1\ot\ldots\ot A_n)
    :=
    \prod_{k=1}^n\frac{\Tr(A_k)}{d},\qquad A_k\in M_d.
\end{align}
This algebra has the canonical $\tau$-preserving shift endomorphism $\varphi$ fixed by
\begin{align}
    \varphi(A_1\ot\ldots\ot A_n\ot 1\ldots)=1\ot A_1\ot\ldots A_d\ot1\ldots.
\end{align}
Its $\tau$-preserving left inverse is usually denoted $\phi:\CN\to\CN$ and satisfies
\begin{align}\label{eq:phiBimodule}
    \phi(\varphi(x)y\varphi(z))
    =
    x\phi(y)z,\qquad x,y,z\in\CN.
\end{align}
It acts as a normalized partial trace in the first tensor factor, namely $\phi(A_1\ot A_2\ot\ldots)=\tau(A_1)\cdot A_2\ot\ldots$.

With the help of $\phi$, we can describe an important invariant for the equivalence relation $\sim$, namely the (normalized) partial trace of $R$. For $R,S\in\CR$, we have \cite[Thm.~5.8]{ContiLechner:2019}
\begin{align}
    R\sim S\;\Longrightarrow\; \phi(R)=\phi(S).
\end{align}
The reverse implication does not hold in general (also not when besides $\phi(R)=\phi(S)$ also $\dim(R)=\dim(S)$ and $\sigma(R)=\sigma(S)$ is assumed).

Note that although $\phi$ is defined with operator-algebraic methods, $\phi(R)$ is an elementary object that can be computed with tools from linear algebra. With $R\in M_d\ot M_d$, it can be identified with the normalized partial trace $\phi(R)=\frac{1}{\dim R}(\Tr\ot\id)(R)\in M_d$.

\medskip

Having recalled these data, the representations $\rho_R^{(n)}$ of $B_n$ given by $R\in\CR(V)$ can be assembled to a unitary representation $\rho_R$ of the infinite braid group $B_\infty$ (presented on generators $b_k$, $k\in\Nl$, with the same relations \eqref{eq:BnRelations} as the $B_n$) given by
\begin{align}
    \rho_R:B_\infty\to\End\CN,\qquad b_k\mapsto \varphi^{k-1}(R),\quad k\in\Nl.
\end{align}
In the following, we will frequently identify the braid groups $B_n$ on $n$ strands with their images in the inductive limit $B_\infty$.

The counterpart of the canonical endomorphism on the level of $B_\infty$ is the injective group homomorphism
\begin{align}
    s:B_\infty\to B_\infty,\qquad b_k\mapsto b_{k+1},
\end{align}
which clearly satisfies $\rho_R\circ s=\varphi\circ\rho_R$.

It is often also useful to consider the von Neumann subalgebra $\CL_R\subset\CN$ generated by $\rho_R$, i.e. the smallest von Neumann algebra inside $\CN$ that contains all $\varphi^n(R)$, $n\in\Nl_0$. We note that $\CN$ and $\CL_R$ are type II${}_1$ factors, and
\begin{align}
    \begin{array}{ccc}
        \varphi(\CN) &\subset &\CN
        \\
        \cup & & \cup
        \\
        \varphi(\CL_R) &\subset &\CL_R
    \end{array}
\end{align}
is a commuting square of type II${}_1$ factors \cite[]{ContiLechner:2019}. The relative commutant $\varphi(\CN)'\cap\CN$ consists of all elements of the form $x\ot1\ot1\ot\ldots$, $x\in M_d$, and the relative commutant $\varphi(\CL_R)'\cap\CL_R\subset\varphi(\CN)'\cap\CN$ contains in particular the partial trace $\phi(R)$.

We define the \emph{character} of $R$ as
\begin{align}
    \tau_R := \tau\circ\rho_R : B_\infty \to \Cl,
\end{align}
which we also consider as a functional on the unital group ${}^*$-algebra $\Cl B_\infty$. It is important to note that given $R,S\in\CR$, we have the equivalence \cite[Prop.~5.2]{ContiLechner:2019}
\begin{align}\label{eq:EquivalenceAndCharacters}
    R\sim S
    \quad\Longleftrightarrow\quad
    \tau_R=\tau_S\quad\text{and}\quad\dim R=\dim S
    .
\end{align}
In view of this result, it is natural to consider the character $\tau_R$ of $R$. Some elementary properties are gathered in the following proposition.

\begin{proposition}\label{proposition:tauRelementary}
    The character $\tau_R$ of a unitary R-matrix $R\in\CR(V)$ has the following properties:
    \begin{enumerate}
        \item\label{item:tauRnormalized} $\tau_R$ is a \emph{normalized trace} on $\Cl B_\infty$, i.e. $\tau_R:\Cl B_\infty\to\Cl$ is a linear functional such that $\tau_R(xy)=\tau_R(yx)$ for all $x,y\in\Cl B_\infty$, and $\tau_R(1)=1$,
        \item\label{item:tauRpositive} $\tau_R$ is \emph{positive and faithful}, i.e. $\tau_R(x^*x)>0$ for all $x\in \Cl B_\infty\setminus\{0\}$,
        \item\label{item:tauRfactorizes} $\tau_R$ \emph{factorizes} in the sense that
        \begin{align}
            \tau_R(xs^n(y))=\tau_R(x)\tau_R(y),\qquad x\in \Cl B_n,\,y\in\Cl B_\infty.
        \end{align}
        \item\label{item:tauRrational} $\tau_R$ is \emph{rational} in the following sense: There exists $d\in\Nl$ (namely, $d=\dim R$) such that for any $n\in\Nl$ and any orthogonal projection $p\in\Cl B_n$,
        \begin{align}
            d^n\tau_R(p)\in\Nl_0.
        \end{align}
    \end{enumerate}
\end{proposition}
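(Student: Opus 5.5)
The four properties are checked one by one, always writing $\tau_R=\tau\circ\rho_R$ and using properties of the trace $\tau$ on $\CN$ and of the shift $\varphi$.

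\emph{Normalization and trace property.} $\tau_R$ is the linear extension to $\Cl B_\infty$ of a function on $B_\infty$, and $\rho_R$ extends to a unital ${}^*$-homomorphism $\Cl B_\infty\to\CN$, since $\rho_R$ is a unitary representation. Then $\tau_R(xy)=\tau(\rho_R(x)\rho_R(y))=\tau(\rho_R(y)\rho_R(x))=\tau_R(yx)$ because $\tau$ is a trace, and $\tau_R(1)=\tau(1)=1$.

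\emph{Positivity and faithfulness.} Here I would be careful. Positivity is immediate: $\tau_R(x^*x)=\tau(\rho_R(x)^*\rho_R(x))\ge0$, since $\tau$ is a positive (indeed faithful) trace on $\CN$. But faithfulness of $\tau$ only gives $\tau_R(x^*x)=0\Rightarrow\rho_R(x)=0$. So strict positivity on all of $\Cl B_\infty\setminus\{0\}$ requires injectivity of $\rho_R$ on the group algebra, and this generally fails: for $q\cdot1$ the image is one-dimensional. My plan is therefore to read item~\ref{item:tauRpositive} as faithfulness on the quotient $\Cl B_\infty/\ker\rho_R$, equivalently on the GNS algebra $\CL_R$. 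With this reading it is exactly faithfulness of $\tau$ restricted to $\CL_R$. I expect this interpretation issue to be the main subtlety, rather than any computation.

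\emph{Factorization.} For $x\in\Cl B_n$, $\rho_R(x)$ lies in $M_d^{\ot n}\ot 1\ot\cdots$, since the generators $b_1,\dots,b_{n-1}$ map to $\varphi^{k-1}(R)$ with $k\le n-1$, which act only on the first $n$ factors. Moreover $\rho_R(s^n(y))=\varphi^n(\rho_R(y))$ acts on factors $n+1,n+2,\dots$. The product trace $\tau$ factorizes over elementary tensors, so it factorizes over the tensor splitting $M_d^{\ot n}\ot\CN$. Using $\tau\circ\varphi^n=\tau$, this gives $\tau(\rho_R(x)\varphi^n(\rho_R(y)))=\tau(\rho_R(x))\,\tau(\rho_R(y))$. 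One passes from elementary tensors to general elements by linearity and normality.

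\emph{Rationality.} Let $p\in\Cl B_n$ be an orthogonal projection. Then $P=\rho_R(p)$ is a projection in $M_d^{\ot n}\ot1$, i.e.\ $P=P_0\ot1\ot\cdots$ with $P_0$ a projection on $V^{\ot n}$. Since $\tau$ restricted to $M_d^{\ot n}$ is $d^{-n}\Tr$, we get $d^n\tau_R(p)=\Tr(P_0)=\operatorname{rank}P_0\in\Nl_0$.
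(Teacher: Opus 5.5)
Your argument is correct and follows the paper's proof item by item. The trace and positivity properties come from $\rho_R$ being a unital ${}^*$-homomorphism into $\CN$ and $\tau$ being a positive faithful trace; factorization comes from $\tau$ splitting over the tensor factors $1,\ldots,n$ versus $n+1,n+2,\ldots$, together with $\tau\circ\varphi^n=\tau$; rationality comes from $\tau|_{M_d^{\ot n}}=d^{-n}\Tr$ applied to the projection $\rho_R^{(n)}(p)$.

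Your caveat on faithfulness is justified, and the problem is even more general than you state. The paper's one-line argument likewise only gives faithfulness on $\Cl B_\infty/\ker\rho_R$. This kernel is never trivial, for any $R$: $\prod_{\la\in\sigma(R)}(b_1-\la)$ is a nonzero element of $\Cl B_\infty$, because the powers of $b_1$ are distinct group elements, yet it is annihilated by $\rho_R$. So your reading of that item is the right repair, and it is all the later arguments need, since only positivity enters there.
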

\begin{proof}
    \ref{item:tauRnormalized} is a straightforward consequence of $\rho_R$ being a representation and $\tau$ a trace, and \ref{item:tauRpositive} follows similarly because $\rho_R$ is a ${}^*$-representation of $\Cl B_\infty$ and $\tau$ is positive and faithful.

    The normalized trace $\tau$ factorizes over tensor products, which implies property \ref{item:tauRfactorizes} of $\tau_R$ by noting that $s^n(y)$ acts trivially on the first $n$ tensor factors, and $x\in\Cl B_n$ acts non-trivially only on the first $n$ factors.

    \ref{item:tauRrational} follows because for $p\in \Cl B_\infty$ any orthogonal projection, $\rho_R(p)=\rho_R^{(n)}(p)\ot1\ldots\in\CN$ is an orthogonal projection as well. Hence
    \begin{align}
        (\dim R)^n\tau_R(p)=\Tr_{V\tp{n}}(\rho_R^{(n)}(p))=\dim(\rho_R^{(n)}(p)V\tp{n})\in\Nl_0.
    \end{align}
\end{proof}

In addition to the properties listed here, we will be interested in another relevant notion, namely that of a \emph{Markov trace}, satisfying a factorization property which plays a prominent role in knot theory \cite{Jones:1983_2,Jones:1987,Wenzl:1988_2,FreydYetterHosteLickorishMillettOcneanu:1985}.

\begin{definition}
    A trace $\mu$ on $\Cl B_\infty$ is called a \emph{Markov trace} if\footnote{This definition is equivalent to the condition $\mu(xb_n)=\mu(b_1)\mu(x)$ for all $x\in B_n$, $n\in\Nl$, which is more often used to define Markov traces \cite{Jones:1987,Wenzl:1988_2}. To prove the equivalence of the two conditions, one uses the fact that $B_n$ admits an inner automorphism $\gamma_n$, $n\in\Nl$, such that $\gamma_n(b_k)=b_{n-k}$ for $1\leq k\leq n-1$ \cite{Birman:1974}. Since $\mu$ is a trace, we have $\mu\circ\gamma_n=\mu$, $\mu\circ s=\mu$, $\mu(b_n)=\mu(b_1)$, and the equivalence follows.}
    \begin{align}
        \mu(b_1s(x))=\mu(b_1)\mu(x),\qquad x\in \Cl B_\infty.
    \end{align}
\end{definition}

While Proposition~\ref{proposition:tauRelementary} holds for the character $\tau_R$ of every $R\in\CR$, only some of them have the Markov property. The following result makes use of the left inverse $\phi$ of the canonical endomorphism $\varphi$ and its bimodule property \eqref{eq:phiBimodule}.

\begin{proposition}\label{prop:markov-comm}
    Let $R\in\R$. Then $\tau_R$ is a Markov trace if and only if $R$ has trivial partial trace\footnote{R-matrices have coinciding left and right partial traces \cite[Thm.~5.10]{ContiLechner:2019}.}, $\phi(R)=\tau(R)\cdot1$.
\end{proposition}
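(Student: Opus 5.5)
We prove both directions by rewriting the Markov condition in terms of $\phi$. Using $\rho_R(b_1)=R$ and $\rho_R\circ s=\varphi\circ\rho_R$, for $x\in\Cl B_\infty$ we get
\begin{align}
    \tau_R(b_1s(x))=\tau(R\,\varphi(\rho_R(x))).
\end{align}
Since $\tau$ is $\varphi$-invariant, $\tau\circ\phi=\tau$ (as $\phi$ is the $\tau$-preserving left inverse). The bimodule property \eqref{eq:phiBimodule} with $y=R$, $x=1$, $z=\rho_R(x)$ then gives
\begin{align}
    \tau(R\varphi(\rho_R(x)))=\tau(\phi(R\varphi(\rho_R(x))))=\tau(\phi(R)\rho_R(x)).
\end{align}
So the Markov condition is equivalent to
\begin{align}
    \tau(\phi(R)\,y)=\tau(R)\,\tau(y)\qquad\text{for all } y\in\rho_R(\Cl B_\infty),
\end{align}
where we use $\tau_R(b_1)=\tau(R)$.

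\emph{The direction ``$\Leftarrow$''.} If $\phi(R)=\tau(R)\cdot1$, the displayed condition holds trivially, so $\tau_R$ is a Markov trace.

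\emph{The direction ``$\Rightarrow$''.} Assume the Markov property, and set $c:=\phi(R)-\tau(R)1$. The condition says $\tau(c\,y)=0$ for all $y$ in the ${}^*$-algebra $\rho_R(\Cl B_\infty)$. By normality of $\tau$, this extends to all $y\in\CL_R$, the weak closure.

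The main point is that $c\in\CL_R$. Indeed, as recalled above, $\phi(R)\in\varphi(\CL_R)'\cap\CL_R\subset\CL_R$. The reason is the commuting square property: $\phi$ restricted to $\CL_R$ is the trace-preserving conditional expectation onto $\varphi(\CL_R)$ composed with $\varphi^{-1}$, hence it maps $\CL_R$ into $\CL_R$. Since $1\in\CL_R$ as well, $c\in\CL_R$. Moreover, $c^*\in\CL_R$, because $\phi$ is ${}^*$-preserving and $\CL_R$ is a ${}^*$-algebra.

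Now choose $y=c^*$. This gives $\tau(cc^*)=0$, and faithfulness of $\tau$ on $\CN$ yields $c=0$, i.e. $\phi(R)=\tau(R)1$.

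The only delicate step is justifying $\phi(R)\in\CL_R$, which rests on the commuting square cited from \cite{ContiLechner:2019}. If one prefers to avoid the operator-algebraic input, there is an alternative: approximate $c^*$ in $L^2(\tau)$ by elements of $\rho_R(\Cl B_\infty)$. This approach requires knowing that $\phi(R)$ lies in the $L^2$-closure of $\rho_R(\Cl B_\infty)$, which is the same fact in another form.
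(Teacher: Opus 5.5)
Your proof is correct and follows the paper's argument essentially step for step. Both rewrite the Markov defect as $\tau\big((\phi(R)-\tau(R)1)\rho_R(x)\big)$ via the bimodule property of $\phi$, extend to $\CL_R$ by continuity, and test against $(\phi(R)-\tau(R)1)^*\in\CL_R$ to conclude by faithfulness of $\tau$. Your commuting-square justification that $\phi(\CL_R)\subset\CL_R$ is exactly the fact the paper invokes when it notes $\phi(R)\in\varphi(\CL_R)'\cap\CL_R$.
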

\begin{proof}
	The trace $\tau_R$ is a Markov trace if and only if for each $x\in\Cl B_\infty$, we have
    \begin{align*}
        0&=
        \tau_R(b_1s(x))-\tau_R(b_1)\tau_R(x)
        \\
        &=
        \tau(R\varphi(\rho_R(x)))-\tau(R)\tau(\rho_R(x))
        \\
        &=
        \tau\Big((R-\tau(R)\cdot1)\,\varphi(\rho_R(x))\Big)
        \\
        &=
        \tau\Big((\phi(R)-\tau(R)\cdot1)\,\rho_R(x)\Big)
        .
    \end{align*}
    By continuity, this extends to $0=\tau\Big((\phi(R)-\tau(R)\cdot1)\,m\Big)$ for all $m\in\CL_R$, and in particular we may choose $m=(\phi(R)-\tau(R)1)^*$ because $\phi(R)\in\CL_R$. As $\tau$ is faithful, we obtain $\phi(R)=\tau(R)1$. The reverse implication is obvious.
\end{proof}

Recalling that $\phi(R)$ lies in the relative commutant $\varphi(\CL_R)'\cap\CL_R$, it is clear that $\tau_R$ is Markov when the subfactor $\varphi(\CL_R)\subset\CL_R$ is irreducible. As irreducibility is usually not easy to check from a given R-matrix, we next present a straightforward sufficient condition for irreducibility. We will say that $R$ \emph{has a pair of opposite eigenvalues} if its spectrum $\sigma(R)$ contains $\la$ and $-\la$ for some $\la\in\Cl$.

\begin{proposition}\label{prop:comm-R2}
    If $R\in\R$ has no pair of opposite eigenvalues, $\varphi(\CL_R)\subset\CL_R$ is irreducible and consequently $\tau_R$ is a Markov trace.
\end{proposition}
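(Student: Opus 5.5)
The plan is to compute the relative commutant $\varphi(\CL_R)'\cap\CL_R$ directly. It suffices to show that it consists only of scalars; the Markov property then follows from Proposition~\ref{prop:markov-comm}, since $\phi(R)$ lies in this relative commutant.

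\textbf{Step 1: localize.} Let $x\in\varphi(\CL_R)'\cap\CL_R$. Since $\varphi(\CL_R)'\cap\CL_R\subset\varphi(\CN)'\cap\CN$, we can write $x=x_1\ot1\ot1\ot\ldots$ with $x_1\in M_d$.

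\textbf{Step 2: use a second copy of $R$.} The element $R_2=\varphi(R)$ lies in $\varphi(\CL_R)$. Hence $x$ commutes with $R_2$, and with $R_1=R$ because $x\in\CL_R$ and $R$ is in $\CL_R$. Consider the element $R_1R_2^{\,2}R_1$. It is a standard braid-group fact that $b_1b_2^2b_1$ commutes with $s(B_\infty)$; more to the point, it is also a product of elements that $x$ commutes with. I would rather use the relation $R_1R_2R_1=R_2R_1R_2$ directly, as follows. Commutation $[x_1\ot1,R_2]=0$ is automatic, so it gives no information. The real constraint comes from $x$ commuting with $\varphi^2(R)$ etc. (trivial) and with $\varphi(\CL_R)$, which contains $\varphi(R)=1\ot R$.

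The cleaner route is via the trace. Consider the conditional expectation $E:\CL_R\to\varphi(\CL_R)$, which coincides with $\varphi\circ\phi$ restricted to $\CL_R$ thanks to the commuting square. Then $\varphi(\CL_R)'\cap\CL_R$ is generated, as far as the element we care about is concerned, by $\phi(R)=:P\in M_d$. The actual requirement is only that $P$ be scalar, so I focus on that.

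\textbf{Step 3: show $P$ is scalar via the spectral condition.} Since $P\ot1$ commutes with $1\ot R$ and lies in $\CL_R$, the plan is to exploit the YBE through $\phi$. Applying $\phi$ to $R_1R_2R_1=R_2R_1R_2$ and using the bimodule property \eqref{eq:phiBimodule} gives
\[
\phi(R_1R_2R_1)=R\,(P\ot1)\,R .
\]
On the other hand, $\phi$ of the left-hand side can be computed with the partial-trace identities. This yields a relation of the form $R(P\ot1)R=(1\ot P)R(1\ot P)$-type, together with $[P\ot1,R]=0$ and $[1\ot P,R]=0$, which follow from the relative-commutant membership and the equality of left and right partial traces. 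Now decompose $V=\bigoplus_\alpha V_\alpha$ into eigenspaces of $P$ with distinct eigenvalues $p_\alpha$. Commutation with $P\ot1$ and $1\ot P$ makes $R$ block diagonal with respect to $V_\alpha\ot V_\beta$. The braid relation then forces $R$ to map $V_\alpha\ot V_\beta$ to $V_\beta\ot V_\alpha$, using $R(P\ot1)=(1\ot P)R$, a standard consequence of the bimodule identity.

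\textbf{Step 4: produce opposite eigenvalues.} If $P$ had two distinct eigenvalues, $R$ would swap the blocks $V_\alpha\ot V_\beta$ and $V_\beta\ot V_\alpha$ for $\alpha\neq\beta$. On their direct sum $R$ is off-diagonal, and $R^2$ preserves each block. An off-diagonal unitary $\begin{pmatrix}0&A\\B&0\end{pmatrix}$ has spectrum symmetric under $\la\mapsto-\la$, since conjugation by $\diag(1,-1)$ negates it. This produces a pair of opposite eigenvalues, contradicting the hypothesis. Hence $P$ is scalar.

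For full irreducibility one runs the same argument with a general $x_1$ in place of $P$, using $[x_1\ot1,R_1R_2\cdots]$-type relations and the identity $R(x_1\ot1)R^*=1\ot x_1$, which comes from $x$ commuting with $\varphi(\CL_R)$ together with Jones-type basic-construction arguments.

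\textbf{Main obstacle.} The main obstacle is Step 3: rigorously deriving the intertwining relation $R(y\ot1)=(1\ot y)R$ for elements $y$ of the relative commutant. I would first try to obtain it from $E=\varphi\phi$ and the commuting square, by testing against $\varphi(\rho_R(b))$.
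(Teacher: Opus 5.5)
There is a genuine gap, and it is the step you flag yourself as the main obstacle. Your whole argument rests on the two intertwining relations $R(x\ot1)R^*=1\ot x$ and $R(1\ot x)R^*=x\ot1$ (equivalently $\varphi(x)=RxR^*=R^*xR$) for $x\in\varphi(\CL_R)'\cap\CL_R$. None of your suggested routes delivers them.

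\begin{itemize}
\item Applying $\phi$ to the Yang--Baxter equation evaluates only one side, $\phi(\varphi(R)R\varphi(R))=R(P\ot1)R$. The other side, $\phi(R\varphi(R)R)$, cannot be computed with the bimodule property \eqref{eq:phiBimodule}. So the ``$R(P\ot1)R=(1\ot P)R(1\ot P)$-type'' relation is unfounded, and $R(P\ot1)=(1\ot P)R$ is not a consequence of the bimodule identity either.
\item Step~3 asserts $[P\ot1,R]=0$ and $[1\ot P,R]=0$. Membership in the relative commutant only gives commutation with $\varphi^n(R)$, $n\geq1$, and these claims are false in general. For the matrix $R_2$ of Proposition~\ref{proposition:2d} with $p\neq s$, one has $\phi(R_2)=\frac12\diag(p,s)$, which does not commute with $R_2$.
\item Commutation of $x\ot1$ with $R$ is essentially the conclusion itself: it would make $x$ central in the factor $\CL_R$ with no spectral hypothesis at all.
\item Combined with the swap property $R(V_\alpha\ot V_\beta)\subset V_\beta\ot V_\alpha$ of your Step~4, these commutations would force $V_\alpha\ot V_\beta=0$ for $\alpha\neq\beta$ outright. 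So Steps~3 and~4 are mutually inconsistent as written.
\end{itemize}

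The missing idea is the Yang--Baxter endomorphism. The paper imports from \cite[Prop.~3.4 (i)]{ContiLechner:2019} that $\varphi(\CL_R)'\cap\CL_R=M_d\cap\CL_R\subset\{x\in M_d:\varphi(x)=R^*xR\}$. It then obtains $\varphi(x)=RxR^*$ as well from $\CL_R=\CL_{R^*}$.

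A direct argument goes as follows. $\lambda_R:=\lim_n\mathrm{Ad}(R_1\cdots R_n)$ stabilizes on local elements and is $\tau$-preserving, so it extends to a normal endomorphism of $\CN$. It equals $\mathrm{Ad}(R)$ on $M_d$. It equals $\varphi$ on $\CL_R$, because $R_1\cdots R_{j+1}R_j=R_{j+1}R_1\cdots R_{j+1}$. Hence $\varphi(x)=RxR^*$ for $x\in M_d\cap\CL_R$, and the same argument with $R^*$ gives $\varphi(x)=R^*xR$.

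With both relations available, your Step~4 does work, provided you make two changes:
\begin{itemize}
\item discard the false commutation claims;
\item apply it to an arbitrary self-adjoint $x$ in the relative commutant (which is a ${}^*$-algebra), not just to $\phi(R)$. The element $\phi(R)$ need not be self-adjoint, and in any case handling it alone yields only the Markov property, not irreducibility.
\end{itemize}

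So repaired, your argument is a valid alternative to the paper's ending. The paper deduces that $x$ commutes with $R^2$, hence with $R$ (since $R$ and $R^2$ share spectral projections), hence is central in the factor $\CL_R$. Your off-diagonal block argument reaches the contradiction directly, without invoking factoriality.
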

\begin{proof}
    This proof relies on the following characterization of the relative commutant of $\varphi(\CL_R)\subset\CL_R$:
    \begin{align}
        \varphi(\CL_R)'\cap\CL_R
        =
        M_d\cap\CL_R
        \subset\{x\in M_d \col \varphi(x)=R^*xR\},
    \end{align}
    see \cite[Prop.~3.4 (i)]{ContiLechner:2019} for a proof. As $\CL_R=\CL_{R^*}$, we conclude that any $x\in\varphi(\CL_R)'\cap\CL_R$ satisfies $\varphi(x)=R^*xR=RxR^*$, i.e. $x$ commutes with $R^2$. As $R$ has no pair of opposite eigenvalues, $R^2$ and $R$ have the same spectral projections, and we find that $x$ commutes not only with $R^2$, but also with $R$. On the other hand, $x$ also commutes with $\varphi^n(R)$, $n\geq1$, because these operate trivially on the first tensor factor. Thus any $x\in\varphi(\CL_R')\cap\CL_R$ commutes with the whole representation $\rho_R$, i.e. $x$ lies in the center of $\CL_R$. As $\CL_R$ is a factor, $x$ must be a multiple of the identity.
\end{proof}

The converse of this proposition is not true, i.e. there exists $R\in\R$ such that $\varphi(\CL_R)\subset\CL_R$ is irreducible and $R$ has a pair of opposite eigenvalues. An example of this is the tensor flip \cite{LechnerPennigWood:2019}. However, for the class of non-involutive Hecke R-matrices, to be introduced next, Prop.~\ref{prop:comm-R2} will turn out to be an efficient tool.

\bigskip

As an example and for later reference, we now recall an interesting family of unitary R-matrices with Markov traces. These R-matrices are called metaplectic or Gaussian. They go back to work of Goldschmidt and Jones \cite{GoldschmidtJones:1989,Jones:1989,Jones:1989_2} and have been generalized by Galindo and Rowell \cite{GalindoRowell:2013}. We will define one R-matrix $G_d$ per dimension $d\geq2$.


For the definition of the Gaussian R-matrices, we will have to distinguish between odd and even $d$. Following \cite{GalindoRowell:2013}, we define the root of unity
\begin{align}
 \xi:=
 \begin{cases}
  e^{2\pi i/d} & d \text{ odd }\\
  e^{\pi i/d} & d \text{ even }
 \end{cases}
\end{align}
and fix an orthonormal basis $(e_k)_{k=0,\ldots,d-1}$ of $\Cl^d$. We define $U\in M_d\ot M_d$ in the tensor product basis as
\begin{align}
 U(e_k\ot e_l)
 :=
 \xi^{l-k}(e_{k+1}\ot e_{l+1}),
\end{align}
where here and in the following, the indices on $e_k$ are extended periodically, i.e. $e_{k+d}=e_k$. With these definitions, one verifies that for any (even or odd) $d\in\Nl$, the crucial exchange relation
\begin{align}
 U\varphi(U)=\xi^2\varphi(U)U,\qquad U^d=1
\end{align}
holds. This implies that
\begin{align}
 G_d:=\frac{1}{\sqrt{d}}\sum_{k=0}^{d-1}\xi^{k^2}U^k
\end{align}
is a unitary R-matrix, which we will refer to as the {\em Gaussian R-matrix of dimension $d$}.

\begin{lemma}\label{lemma:Gauss}
	Let $G_d\in\CR(\Cl^d)$ be the Gaussian R-matrix of dimension $d$.
	\begin{enumerate}
	 \item\label{item:GaussianRSpectrum} $G_d$ has at most $d$ distinct eigenvalues, and all eigenvalues are $(4d)$-th roots of unity. More precisely, the eigenvalues of $G_d$ are
    \begin{align*}
        \mu_l(d) &=
                \begin{cases}
                    e^{i\pi/4}e^{-i\pi l^2/d} & d \text{ even }\\
                    \frac{1}{\sqrt{2}}e^{i\pi/4-\frac{i\pi l^2}{2d}}(1+e^{-i\pi(d+2l)/2}) & d \text{ odd}
                \end{cases}
                ,\qquad l=0,\ldots,d-1.
    \end{align*}
	Specifically for $d=2$ and $d=3$, the spectrum is
	\begin{align}
        \sigma(G_2)&=\{e^{i\pi/4},e^{-i\pi/4}\},\\
        \sigma(G_3)&=\{i,e^{-i\pi/6}\}.
	\end{align}

	\item\label{item:GaussianRPartialTrace} The normalized partial trace of $G_d$ is
	\begin{align}
		\phi(G_d)=\frac{1}{\sqrt{d}},
	\end{align}
	so $\tau_{G_d}$ is a Markov trace.
	\end{enumerate}
\end{lemma}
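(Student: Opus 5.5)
The plan is to work entirely with the unitary $U$. Every eigenvalue of $G_d$ is then a Gaussian sum evaluated at an eigenvalue of $U$, and the partial trace reduces to computing $\phi(U^k)$ for each $k$.

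\emph{Spectrum.} We first determine the spectrum of $U$. Since $U^d=1$, every eigenvalue of $U$ is a $d$-th root of unity. The next step is to show that each $d$-th root of unity actually occurs. For this, note that $U$ maps $e_k\ot e_l$ to a multiple of $e_{k+1}\ot e_{l+1}$, so it preserves the subspaces spanned by $\{e_k\ot e_{k+m}\}_k$ for fixed $m$. On each such $d$-dimensional subspace, $U$ acts as a weighted cyclic shift with constant weight $\xi^m$. Hence its eigenvalues there are $\xi^m\omega^j$ with $\omega=e^{2\pi i/d}$, and in particular every $d$-th root of unity $\omega^l$ appears. This uses $\xi^{md}=\pm1$ and possibly a rescaling of the basis.

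Writing the spectral decomposition $U=\sum_l \omega^l P_l$, we get
\begin{align*}
G_d=\sum_l \Big(\tfrac{1}{\sqrt d}\sum_k \xi^{k^2}\omega^{kl}\Big)P_l .
\end{align*}
So the eigenvalues are the quadratic Gauss sums $\frac{1}{\sqrt d}\sum_{k}\xi^{k^2+2kl}$ in the even case (where $\omega=\xi^2$), and the analogous sums in the odd case.

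For $d$ even, we complete the square: $\xi^{k^2+2kl}=\xi^{(k+l)^2}\xi^{-l^2}$. The function $k\mapsto\xi^{k^2}$ is $d$-periodic, because $\xi^{(k+d)^2-k^2}=\xi^{2kd+d^2}=1$ when $d$ is even. So the sum equals $\xi^{-l^2}\cdot\frac1{\sqrt d}\sum_k e^{i\pi k^2/d}$. The classical Landsberg--Schaar / Gauss evaluation gives $\sum_{k=0}^{d-1}e^{i\pi k^2/d}=\sqrt d\,e^{i\pi/4}$ for $d$ even, which yields $\mu_l=e^{i\pi/4}e^{-i\pi l^2/d}$.

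For $d$ odd, we complete the square using $2^{-1}$ modulo $d$ and again reduce to the standard Gauss sum $\sum_k e^{2\pi i k^2/d}=\frac{1+i^{-d}}{1-i}\sqrt d$. Rewriting the result gives the stated formula with the factor $\frac1{\sqrt2}(1+e^{-i\pi(d+2l)/2})$. I expect this bookkeeping of the odd Gauss sum, and matching its exact normalization, to be the main computational obstacle. I would cite the standard evaluation rather than reprove it.

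The remaining claims of part~\ref{item:GaussianRSpectrum} then follow directly. There are at most $d$ distinct eigenvalues since $l$ ranges over $d$ values. They are $4d$-th roots of unity by inspection: $e^{i\pi/4}e^{-i\pi l^2/d}$ is a $(4d)$-th root of unity. In the odd case, unitarity of $G_d$ forces $|\mu_l|=1$, and then the explicit form shows that the phase is a multiple of $2\pi/4d$. The cases $d=2,3$ are obtained by plugging in $l=0,1$, respectively $l=0,1,2$, and reading off the distinct values.

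\emph{Partial trace.} We compute $\phi(U^k)=\frac1d(\Tr\ot\id)(U^k)$. By the definition of $U$, $U^k$ maps $e_a\ot e_b$ to a multiple of $e_{a+k}\ot e_{b+k}$. Its partial trace over the first factor therefore only picks up the diagonal terms $a+k\equiv a$, so $\phi(U^k)=0$ unless $k\equiv0\bmod d$. Consequently $\phi(G_d)=\frac{1}{\sqrt d}\,\xi^0\,\phi(1)=\frac1{\sqrt d}$. Since this is a multiple of the identity, and $\tau(G_d)=\tau(\phi(G_d))=\frac{1}{\sqrt d}$, Proposition~\ref{prop:markov-comm} shows that $\tau_{G_d}$ is a Markov trace.
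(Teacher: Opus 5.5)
Your proposal is correct and follows essentially the paper's route: you determine $\sigma(U)$, write the eigenvalues of $G_d$ as the Gauss sums $\frac{1}{\sqrt d}\sum_k\xi^{k^2}\omega^{kl}$ and evaluate them, and use $\phi(U^k)=0$ for $0<k<d$. The differences are minor: your completing the square plus Landsberg--Schaar is a special case of the reciprocity theorem the paper cites, and you find $\sigma(U)$ through invariant cyclic subspaces rather than the paper's explicit projections $P_l=\frac1d\sum_k e^{-2\pi ikl/d}U^k$.

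One correction in that last step: for even $d$ the weights on $\mathrm{span}\{e_k\otimes e_{k+m}\}_k$ are not constant. With indices in $\{0,\ldots,d-1\}$, the exponent $l-k$ drops by $d$ at the wrap-around and $\xi^d=-1$, so the eigenvalues on that block are not $\xi^m\omega^j$. Since the product of the weights is $1$ by $U^d=1$, they are exactly all $d$-th roots of unity. In any case the block $m=0$, a pure cyclic shift, already gives everything you need.
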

\begin{proof}
	\ref{item:GaussianRSpectrum} Since $U^d=1$, its eigenvalues are $d$-th roots of unity. To see that all $d$-th roots are eigenvalues of $U$, one checks directly that the $P_l:=\frac{1}{d}\sum_{k=0}^{d-1}e^{-\frac{2\pi i}{d}kl}U^k$, $l=0,\ldots,d-1$, are non-zero orthogonal projections with $P_lP_m=\delta_{lm}P_l$, $\sum_l P_l=1$, and $\sum_l e^{\frac{2\pi i l}{d}}P_l=U$. This identifies the $P_l$ as the spectral projections of $U$ and shows $\sigma(U)=\{e^{\frac{2\pi il}{d}}\,:l=0,\ldots,d-1\}$.

    The eigenvalues of $G_d$ are therefore the numbers
    \begin{align}
        \mu_l
        =
        \frac{1}{\sqrt{d}}\sum_{k=0}^{d-1}\xi^{k^2}e^{\frac{2\pi ilk}{d}},
    \end{align}
    in particular $|\sigma(G_d)|\leq d$.

    By application of the reciprocity theorem for generalized Gauss sums \cite[Thm.~1.2.2]{BerndtEvansWilliams:1998} and recalling that $\xi=e^{2\pi i/d}$ for $d$ odd and $\xi=e^{i\pi/d}$ for $d$ even, it follows that
    \begin{align}\label{eq:GaussianEigenvalues}
        \mu_l &=
                \begin{cases}
                    e^{i\pi/4}e^{-i\pi l^2/d} & d \text{ even }\\
                    \frac{1}{\sqrt{2}}e^{i\pi/4-\frac{i\pi l^2}{2d}}(1+e^{-i\pi(d+2l)/2}) & d \text{ odd}
                \end{cases}
                .
    \end{align}
    An explicit check shows that $\mu_l^{4d}=1$ in all cases.

	\ref{item:GaussianRPartialTrace} It follows directly from the definition of $U$ that the partial traces of $U,U^2,\ldots,U^{d-1}$ vanish. Hence $\phi(G_d)=\frac{1}{\sqrt{d}}\sum_{k=0}^{d-1}\xi^{k^2}\phi(U^k)=\frac{1}{\sqrt{d}}$.
\end{proof}

\section{Unitary Hecke R-matrices}\label{section:hecke}

As motivated in the introduction, we now restrict attention to unitary R-matrices with prescribed spectrum. That is, we fix a finite subset $S\subset\bT$ of the unit circle and demand $\sigma(R)=S$. Then $\prod_{\la\in S}(R-\la)=0$, i.e. the homomorphism $\rho_R:\Cl B_\infty\to\CN$ factors through the quotient $(\Cl B_\infty)_S=\Cl B_\infty/\CI(S)$ by the ${}^*$-ideal $\CI(S)$ generated by the polynomial $\prod_{\la\in S}(b_1-\la)$.

If $S=\{\la\}$, the only unitary R-matrix with spectrum $S$ is trivially $R=\la1$. In the case of two distinct eigenvalues, $S=\{\la_1,\la_2\}$ with $\la_1\neq\la_2$, the situation becomes much more interesting. We first note that since the Yang-Baxter equation is homogeneous, multiples of R-matrices are again R-matrices, and two R-matrices $R,S$ with spectrum $\{\la_1,\la_2\}$ are equivalent if and only if $-\la_2^{-1}R\sim-\la_2^{-1}S$ are equivalent. That is, we may restrict to R-matrices having spectrum $\{-1,q\}$, where $q:=-\la_1/\la_2\in\Tl\backslash\{-1\}$. In the following, we write $\CR_q(V)$ and $\CR_q:=\bigcup_V\CR_q(V)$ for the sets of all unitary R-matrices with spectrum $\sigma(R)=\{-1,q\}$ and base space $V$ or arbitrary (finite-dimensional) base space, respectively.

The elements of $\CR_q$ will be referred to as \emph{Hecke R-matrices} (with parameter $q$) because the algebra $H_\infty(q):=(\Cl B_\infty)_{\{-1,q\}}$ is known as the (infinite) Hecke algebra (of type $A$).

A special case is $q=1$. This case corresponds to $R$ having the opposite pair of eigenvalues $\{-1,+1\}$ and thus being involutive, and has been classified before \cite{LechnerPennigWood:2019}. In the non-involutive case, the results of the previous section immediately imply the following lemma.

\begin{lemma}\label{lemma:RMarkov}
    Let $R\in\R_q$ with $q\neq\pm1$. Then $\varphi(\CL_R)\subset\CL_R$ is irreducible, and $\tau_R$ is a positive Markov trace on $H_\infty(q)$.
\end{lemma}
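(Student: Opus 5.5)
The plan is to reduce the lemma to Proposition~\ref{prop:comm-R2} and Proposition~\ref{proposition:tauRelementary}. The only input we need is that $R$ has no pair of opposite eigenvalues when $q\neq\pm1$.

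First, since $R\in\CR_q$, its spectrum is exactly $\sigma(R)=\{-1,q\}$. A pair of opposite eigenvalues would mean that $\la$ and $-\la$ both lie in $\{-1,q\}$ for some $\la\in\Cl$. As $\la\neq0$ and $\la\neq-\la$, the two elements would have to be $\{-1,q\}=\{\la,-\la\}$, which forces $q=1$. Moreover, $q\neq-1$ holds by definition of $\CR_q$, so the spectrum really has two distinct points. Under the hypothesis $q\neq\pm1$, $R$ therefore has no pair of opposite eigenvalues. Proposition~\ref{prop:comm-R2} then gives that $\varphi(\CL_R)\subset\CL_R$ is irreducible and that $\tau_R$ is a Markov trace on $\Cl B_\infty$.

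It remains to see that $\tau_R$ is a positive trace on $H_\infty(q)$, not just on $\Cl B_\infty$. Because $(R+1)(R-q)=0$, the ${}^*$-homomorphism $\rho_R$ annihilates the ideal $\CI(\{-1,q\})$, so it factors through $H_\infty(q)$. Hence $\tau_R=\tau\circ\rho_R$ descends to a well-defined functional on the quotient.

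On the quotient, the trace property, the normalization and the Markov property carry over directly from $\Cl B_\infty$. Positivity follows because $\tau_R(x^*x)=\tau(\rho_R(x)^*\rho_R(x))\geq0$, using positivity of $\tau$, as in Proposition~\ref{proposition:tauRelementary}\ref{item:tauRpositive}.

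No real obstacle is expected. The only point needing care is the elementary spectral check that $\{-1,q\}$ contains no pair $\pm\la$ unless $q=1$. I will not claim faithfulness on $H_\infty(q)$, since the lemma only asserts positivity.
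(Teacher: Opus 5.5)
Your proposal is correct and follows the paper's own proof. You check that $\{-1,q\}$ contains no opposite pair when $q\neq1$, apply Proposition~\ref{prop:comm-R2} to get irreducibility and the Markov property, and note that positivity is inherited from $\tau$. Your extra remarks on $\rho_R$ factoring through $H_\infty(q)$, and your choice not to claim faithfulness (which can fail, since $\rho_R$ need not be injective on $H_\infty(q)$), are sound refinements of the paper's one-line argument.
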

\begin{proof}
    $R$ does not have an opposite pair of eigenvalues because $q\neq1$, which implies irreducibility and in particular the Markov property of $\tau_R$ by Prop.~\ref{prop:comm-R2}. The trace $\tau_R$ is positive by construction.
\end{proof}

Given $q\neq-1$, let us recall the structure of the Hecke algebra $H_\infty(q)$. The minimal polynomial for spectrum $\{-1,q\}$ is $0=(b_1+1)(b_1-q)$, which implies that in $H_\infty(q)=(\Cl B_\infty)_{\{-1,q\}}$ we have, $k\in\Nl$,
\begin{align}
    b_k=q-(1+q)e_k,\qquad 
    e_k:=\frac{q-b_k}{q+1},\qquad q\neq-1.
\end{align}
It is a simple calculation to check that $H_\infty(q)$ is the unital ${}^*$-algebra with generators $1,e_k$, $k\in\Nl$, and relations
\begin{align}\label{eq:Hecke-e}
	e_ke_{k+1}e_k-\frac{q}{(1+q)^2} e_k
	&=
	e_{k+1}e_ke_{k+1}-\frac{q}{(1+q)^2} e_{k+1}
	,\\
	e_ke_l
	&=
	e_le_k\quad\text{for } \;|k-l|>1,\\
	e_k^2&=e_k=e_k^*.
\end{align}
For an R-matrix $R\in\CR_q$ with spectral decomposition $R=-P+q(1-P)$ (i.e., $P$ is the spectral projection corresponding to eigenvalue $-1$, and $1-P$ is the spectral projection corresponding to eigenvalue $q$), the representation $\rho_R$ of $H_\infty(q)$ is fixed by
\begin{align}
    \rho_R(e_1)=P.
\end{align}
We will also use the generators $e_k^\perp:=1-e_k$ which satisfy the same relations as the $e_k$.

Traces and representations of Hecke algebras have been thoroughly investigated because of their relation to the Jones polynomial and its generalizations, and to subfactors \cite{FreydYetterHosteLickorishMillettOcneanu:1985,Jones:1987,Wenzl:1988_2}. It is known that for every $\eta\in\Cl$, there exists a unique Markov trace $\mu_\eta$ on $H_\infty(q)$ with $\mu_\eta(e_1)=\eta$ \cite{FreydYetterHosteLickorishMillettOcneanu:1985,Jones:1987}. In particular, a Markov trace $\mu$ on $H_\infty(q)$ is uniquely fixed by its value $\eta=\mu(e_1)$ on $e_1$.

\begin{corollary}
    Let $q\in\bT\backslash\{1,-1\}$ and $R,S\in\R_q$. The following are equivalent:
    \begin{enumerate}
        \item\label{item:RS} $R\sim S$.
        \item\label{item:tauMarkov} $\tau_R(e_1)=\tau_S(e_1)$ and $\dim R=\dim S$.
        \item\label{item:trace} $\Tr R=\Tr S$ and $\dim R=\dim S$.
    \end{enumerate}
\end{corollary}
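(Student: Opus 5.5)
I will prove the cycle \ref{item:RS}$\Rightarrow$\ref{item:trace}$\Rightarrow$\ref{item:tauMarkov}$\Rightarrow$\ref{item:RS}. Throughout I use the characterization \eqref{eq:EquivalenceAndCharacters}: $R\sim S$ if and only if $\tau_R=\tau_S$ and $\dim R=\dim S$.

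\emph{\ref{item:RS}$\Rightarrow$\ref{item:trace}.} The dimension is an invariant of $\sim$. By \eqref{eq:EquivalenceAndCharacters} we also have $\tau_R=\tau_S$, so in particular $\tau_R(b_1)=\tau_S(b_1)$. This reads $\tau(R)=\tau(S)$, that is $\Tr R/d^2=\Tr S/d^2$ with the common dimension $d$. Alternatively, one can note that the spectrum with multiplicities is an invariant.

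\emph{\ref{item:trace}$\Rightarrow$\ref{item:tauMarkov}.} In $H_\infty(q)$ we have $b_1=q-(1+q)e_1$ with $q\neq-1$, so $\tau_R(e_1)=\frac{q-\tau_R(b_1)}{1+q}=\frac{q-\Tr R/d^2}{1+q}$. Hence equal traces and equal dimensions give $\tau_R(e_1)=\tau_S(e_1)$.

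\emph{\ref{item:tauMarkov}$\Rightarrow$\ref{item:RS}.} This is the substantive step, though it is immediate from the earlier results. Since $q\neq\pm1$, Lemma~\ref{lemma:RMarkov} shows that $\tau_R$ and $\tau_S$ are Markov traces. Because $\rho_R$ and $\rho_S$ factor through $H_\infty(q)$, both traces descend to Markov traces on $H_\infty(q)$. By the cited uniqueness result, a Markov trace on $H_\infty(q)$ is determined by its value $\eta$ on $e_1$. So $\tau_R(e_1)=\tau_S(e_1)$ forces $\tau_R=\tau_S$ on $H_\infty(q)$, and therefore on $\Cl B_\infty$, since both are pulled back along the same quotient map. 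Together with $\dim R=\dim S$, \eqref{eq:EquivalenceAndCharacters} gives $R\sim S$.

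The only point needing care is that the Markov condition $\mu(b_1s(x))=\mu(b_1)\mu(x)$ transfers to the quotient $H_\infty(q)$ in the form used by the uniqueness theorem, i.e.\ $\mu(x e_n)=\eta\,\mu(x)$ for $x\in H_n(q)$. This follows because $e_1$ is an affine combination of $1$ and $b_1$, and because the footnote shows the two Markov formulations are equivalent. I expect no real obstacle here.
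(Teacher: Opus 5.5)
Your proof is correct and takes essentially the same route as the paper. The relation $b_1=q-(1+q)e_1$, i.e.\ $\tau_R(e_1)=\frac{1}{1+q}\bigl(q-\frac{\Tr R}{(\dim R)^2}\bigr)$, links the second and third conditions. The substantive implication to $R\sim S$ is obtained exactly as in the paper, from Lemma~\ref{lemma:RMarkov}, the uniqueness of Markov traces on $H_\infty(q)$ with prescribed value on $e_1$, and \eqref{eq:EquivalenceAndCharacters}. Arranging the implications as a cycle and checking that the Markov property descends to $H_\infty(q)$ in the form $\mu(xe_n)=\mu(e_1)\mu(x)$ only make explicit details the paper leaves implicit.
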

\begin{proof}
    The implication \ref{item:RS}$\Rightarrow$\ref{item:tauMarkov} is clear, and the equivalence of \ref{item:tauMarkov} and \ref{item:trace} follows from the spectral decomposition $R=-P+q(1-P)$, which yields
    \begin{align}
        \tau_R(e_1)=\tau(P)=\frac{1}{1+q}\left(q-\frac{\Tr(R)}{(\dim R)^2}\right).
    \end{align}
    The implication \ref{item:tauMarkov}$\Rightarrow$\ref{item:RS} follows from the uniqueness result on Markov traces mentioned above, and \eqref{eq:EquivalenceAndCharacters}.
\end{proof}

We may therefore label the equivalence classes of Hecke type R-matrices by triples of  the form
\begin{align}\label{eq:label}
    (q,\eta,d) \in \bT\setminus\{\pm1\}\times(0,1)\times\Nl,
\end{align}
with the entries corresponding to the second eigenvalue $q$ (besides $-1$) of $R$, the value $\tau_R(e_1)=\tau(P)$ of the normalized trace on the spectral projection of eigenvalue $-1$, and the dimension $d=\dim R$. We will write $[q,\eta,d]\in\CR_q/\!\!\sim$ for the equivalence class of R-matrices with these parameters. Note that $[q,\eta,d]$ may be empty: The task is to find out which triples $(q,\eta,d)$ are realized by R-matrices, i.e. to decide when $[q,\eta,d]$ is not empty. Some simple restrictions are $d\geq2$ because $R$ has two distinct eigenvalues, and $\eta\in\mathbb{Q}$ because of the rationality property in Proposition~\ref{proposition:tauRelementary}~\ref{item:tauRrational}, but we will find much stronger constraints below.

\medskip

A crucial aspect in this analysis are positivity properties. For a Hecke R-matrix $R\in\R_q$, the $B_\infty$-representation $\rho_R$ defines a $C^*$-representation of $H_\infty(q)$ (denoted by the same symbol) by sending $e_k$ to the spectral projection of $\varphi^{k-1}(R)$ with eigenvalue $-1$. As shown above, these representations admit a positive Markov trace and are non-trivial in the sense that $\rho_R(e_1)\neq\rho_R(e_2)$. Independently of R-matrices, Wenzl analyzed when $H_\infty(q)$ admits non-trivial $C^*$-representations, and for which values of $\mu(e_1)$, the Markov trace $\mu$ on $H_\infty(q)$ is positive.

\begin{theorem}{\cite{Wenzl:1988_2}}\label{thm:Wenzl}
	Let $q\in\Tl\setminus\{1,-1\}$.
     \begin{enumerate}
         \item\label{item:WenzlSpectrum} $H_\infty(q)$ admits non-trivial $C^*$-representations if and only if $q=e^{\pm2\pi i/\ell}$ for some $\ell\in\Nl$ with $\ell\geq4$.
         \item\label{item:WenzlTraces} Let $q=e^{\pm2\pi i/\ell}$, $\ell\geq4$. Then there exist $\ell-1$ distinct positive Markov traces on $H_\infty(q)$. They are parameterized by $k\in\{1,\ldots,\ell-1\}$ such that $\mu(e_1)=\eta_{\ell,k}$, where
	 \begin{align}\label{eq:eta-kl}
		\eta_{\ell,k}:=\frac{1-q^{-k+1}}{(1+q)(1-q^{-k})}=\frac{\sin\frac{\pi(k-1)}{\ell}}{2\cos\frac{\pi}{\ell}\sin\frac{\pi k}{\ell}}.
	 \end{align}
    \end{enumerate}
\end{theorem}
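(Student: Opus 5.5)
This is Wenzl's theorem; the plan below follows \cite{Wenzl:1988_2}, with the Ocneanu/Jones trace formalism, to reconstruct the argument.

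\textbf{Semisimple structure and trace weights.} For generic $q$, the finite Hecke algebras $H_n(q)$ are semisimple with simple summands labelled by Young diagrams $\lambda\vdash n$. Inclusions $H_n\subset H_{n+1}$ follow Young's lattice. A Markov trace $\mu_\eta$ restricted to $H_n$ is a weighted sum of irreducible characters, with weights $\mu_\eta(p_\lambda)$ on minimal projections $p_\lambda$. Ocneanu's formula gives these weights as products over the boxes of $\lambda$ of $q$-number expressions in a parameter $z$ determined by $\eta$. Schematically, $\mu(p_\lambda)=\prod_{(i,j)\in\lambda}\frac{\text{content factor}}{[h(i,j)]_q}$.

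\textbf{Positivity reduces to positive weights.} A trace is positive if and only if all weights on surviving minimal projections are $\ge0$. A $C^*$-representation is nontrivial if and only if it does not factor through the one-dimensional quotients $e_k\mapsto0$ or $e_k\mapsto1$.

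\textbf{Part (i).} Use the relation $e_ke_{k+1}e_k-\frac{q}{(1+q)^2}e_k=\ldots$ together with Jones' index argument. In any $C^*$-representation where $e_1,e_2$ do not commute, the positive operator $e_1e_2e_1$ restricted to $e_1$ must have spectrum compatible with $H_3$ being a $C^*$-algebra. The three-strand algebra has a two-dimensional irreducible representation. Its positivity requires $\frac{q}{(1+q)^2}=\frac{1}{4\cos^2(\theta/2)}$ for $q=e^{i\theta}$, and this is $\ge\frac14$, which is automatic. So the real restriction comes from higher $n$. Here I would follow Wenzl's inductive construction of orthogonal idempotents in $H_n$ via the Jones–Wenzl-type recursion. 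Its coefficients are ratios $[k+1]_q/[k]_q$ of $q$-numbers, and $[m]_q=\sin(m\theta/2)/\sin(\theta/2)$ in this normalization. The recursion stays within positive operators only if each ratio $\sin((k+1)\theta/2)/\sin(k\theta/2)$ remains positive until it hits zero exactly, where the projection vanishes and truncation occurs. This forces $\theta=2\pi/\ell$ with $\ell$ an integer. The bound $\ell\ge4$ excludes $q=\pm1$ and $q=e^{2\pi i/3}$, for which $1+q$ is degenerate or the algebra collapses. Conversely, for $q=e^{2\pi i/\ell}$, Wenzl's path-model representations on $(k,\ell)$-diagrams, i.e.\ those with $\lambda_1-\lambda_k\le\ell-k$, give nontrivial $C^*$-representations with positive inner product.

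\textbf{Part (ii).} Compute $\mu_\eta(p_\lambda)$ as an explicit product of $q$-numbers, and require nonnegativity for all $\lambda$ appearing in the GNS representation. As in the index-rigidity argument, if $\eta$ is not one of the special values, some weight eventually becomes negative. The allowed values are those where the trace is supported on diagrams with at most $k-1$ rows, which cancels the first negative factor. Solving the vanishing condition on the one-column diagram with $k$ boxes, namely $\mu(p_{(1^k)})=0$, gives $\eta=\frac{1-q^{-k+1}}{(1+q)(1-q^{-k})}$. This rewrites as the stated sine ratio. One then checks that all remaining weights are positive, which is a sign analysis of the sines for $k\le\ell-1$. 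Distinctness for different $k$ follows from monotonicity of $\eta_{\ell,k}$ in $k$.

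\textbf{Main obstacle.} The main obstacle is showing that no other $\eta$ yields positivity, i.e.\ locating a negative weight for every non-special $\eta$. This is the delicate part of Wenzl's analysis. I would attack it by tracking the weights of the column diagrams $(1^n)$ and the row diagrams $(n)$ as $n$ grows.
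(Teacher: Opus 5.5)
The paper does not prove this theorem; it quotes it from Wenzl, and your outline reconstructs Wenzl's route. Read as a proof, however, it contains a false step, an off-by-one error, and leaves the decisive step of part (i) and the existence half of part (ii) unargued.

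\textbf{Part (i).} The three-strand level is not vacuous. If the two-dimensional irreducible representation of $H_3(q)$ is a $C^*$-representation, then $e_1\neq e_2$ are projections onto unit vectors $\xi_1,\xi_2$. The Hecke relation then forces $|\langle\xi_1,\xi_2\rangle|^2=\gamma:=\frac{q}{(1+q)^2}=\frac{1}{4\cos^2(\theta/2)}$. Since $|\langle\xi_1,\xi_2\rangle|<1$, this already excludes $|\theta|\geq 2\pi/3$. The relevant bound is $\gamma<1$, not $\gamma\geq\frac14$, and it is not automatic.

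More importantly, ``the recursion stays positive only if each ratio stays positive until it hits zero, which forces $\theta=2\pi/\ell$'' is a heuristic, not an argument. In an arbitrary $C^*$-representation (no trace, not a factor) a negative coefficient gives no contradiction. Take $Q=s(P_n)$ ($P_n$ the $q$-antisymmetrizer) and $x=Qe_1^\perp Q\geq 0$. Idempotency of $P_{n+1}=Q-\alpha_n x$ gives $x^2=\alpha_n^{-1}x$, so $\alpha_n<0$ only forces $x=0$. That just says the constituent $(2,1^{n-1})$ is absent, which is perfectly consistent, e.g.\ for Temperley--Lieb type representations with $e_1\wedge e_2=0$.

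What you would need is that every nontrivial representation contains, at some level, a constituent with indefinite invariant form. For $q$ a primitive $\ell$-th root of unity this cannot in general be decided below level $\ell$. For instance, for $q=e^{4\pi i/\ell}$ with $\ell$ odd and large, all diagrams inside a $\lceil\sqrt{\ell}\,\rceil\times\lceil\sqrt{\ell}\,\rceil$ square label $C^*$-representations of $H_n(q)$ for every $n<\ell$. So the non-semisimple range must be handled. That is the substance of (i), and your outline does not supply it.

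\textbf{Part (ii).} Your vanishing condition is off by one. For $k\leq\ell-2$, $\eta_{\ell,k}$ is the value where $\mu(p_{(1^{k+1})})$ vanishes, so the trace lives on diagrams with at most $k$ rows, matching the $(k,\ell)$-diagrams you quote in (i). Your condition $\mu(p_{(1,1)})=\mu(e_1)=0$ gives $\eta_{\ell,1}$, not $\eta_{\ell,2}$.

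Your ``main obstacle'' can be closed exactly as you suggest, using the recursion from the paper's proof of Theorem~\ref{thm:class1}. Set $[m]=\sin(m\pi/\ell)/\sin(\pi/\ell)$. Then $\eta_{\ell,k}=\frac{[k-1]}{[2][k]}$ and $1-\eta_{\ell,k}=\frac{[k+1]}{[2][k]}$. For a positive Markov trace with $\mu(e_1)=\eta\in[0,1]$, the Markov property gives
\[
\mu(P_{n+1})=\prod_{m=1}^{n}\bigl(1-\alpha_m(1-\eta)\bigr),\qquad \alpha_m=\frac{[2][m]}{[m+1]},\qquad n\leq\ell-2,
\]
where $\alpha_m$ increases from $\alpha_1=1$ to $\alpha_{\ell-2}=[2]^2$. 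Positivity forces the first non-positive factor to vanish. Hence $\eta\in\{\eta_{\ell,1},\dots,\eta_{\ell,\ell-2}\}$ or $\eta>\eta_{\ell,\ell-2}$. The same argument for $e_k^\perp$ gives $\eta\in\{\eta_{\ell,2},\dots,\eta_{\ell,\ell-1}\}$ or $\eta<\eta_{\ell,2}$. Since $\eta_{\ell,2}\leq\eta_{\ell,\ell-2}$ for $\ell\geq 4$, only the $\eta_{\ell,k}$ survive.

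What remains unproved is the existence half. For $n\geq\ell$, $H_n(q)$ is not semisimple, so there are no weights on minimal projections of $H_n(q)$ to check. Your criterion ``positive iff all weights $\geq 0$'' also presupposes that the surviving summands are $C^*$-algebras for the given involution. You need Wenzl's $(k,\ell)$ path representations, their unitarity, and the Markov property of the weighted trace on them; uniqueness then identifies that trace with $\mu_{\eta_{\ell,k}}$.
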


In combination with our previous findings, this result can be used to derive the following classification theorem for non-trivial unitary Hecke R-matrices.

\begin{theorem}\label{thm:class1}
    The only possibly non-empty equivalence classes of Hecke R-matrices are the eight families of the form
    \begin{align}
        \left[\pm i,\tfrac12,2d\right],
        \quad
        \left[e^{\pm i\pi/3},\tfrac13,3d\right],
        \quad
        \left[e^{\pm i\pi/3},\tfrac23,3d\right],
        \quad
        \left[e^{\pm i\pi/3},\tfrac12,2d\right],
    \end{align}
    with $d\in\Nl$.
\end{theorem}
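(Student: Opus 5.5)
The plan is to combine Lemma~\ref{lemma:RMarkov} with Wenzl's Theorem~\ref{thm:Wenzl}, and then use the rationality property Proposition~\ref{proposition:tauRelementary}~\ref{item:tauRrational} to cut the resulting list down to finitely many parameter values.

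\textbf{Reduction to Wenzl's traces.} Let $R\in\CR_q$ with $q\neq\pm1$ and $d=\dim R$. By Lemma~\ref{lemma:RMarkov}, $\tau_R$ is a positive Markov trace on $H_\infty(q)$. Since $R$ has exactly two eigenvalues, $P=\rho_R(e_1)\neq0,1$, so $\rho_R$ is a non-trivial $C^*$-representation. Theorem~\ref{thm:Wenzl}~\ref{item:WenzlSpectrum} then forces $q=e^{\pm2\pi i/\ell}$ with $\ell\geq4$. Theorem~\ref{thm:Wenzl}~\ref{item:WenzlTraces} forces $\eta=\tau(P)=\eta_{\ell,k}$ for some $k\in\{1,\ldots,\ell-1\}$. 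Moreover $\eta\in(0,1)$, which excludes $k=1$, since $\eta_{\ell,1}=0$.

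\textbf{Rationality forces $\ell\in\{4,6\}$.} Proposition~\ref{proposition:tauRelementary}~\ref{item:tauRrational} gives $d^n\mu(p)\in\Nl_0$ for every projection $p\in H_n(q)$. Hence $\mu(p)\in\Ql$ for all such $p$.
\begin{itemize}
\item Taking $p=e_1$ gives $\eta\in\Ql$.
\item To constrain $\ell$ itself, I would apply this to a second projection in $H_3(q)$ whose Markov trace is an explicit expression in $\eta$ and $c:=\frac{q}{(1+q)^2}=\frac{1}{4\cos^2(\pi/\ell)}$. The candidate is the projection onto the range of $e_1\wedge e_2$, or a minimal central projection of $H_3(q)$ (a $q$-antisymmetrizer). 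It can be written as a multiple of $e_1e_2e_1-c\,e_1$ plus symmetric terms via \eqref{eq:Hecke-e}.
\item Its trace is computed with the Markov property, e.g. $\mu(e_1e_2e_1)=\eta^2$. This yields a rational function of $\eta$ and $c$ that is not constant in $c$. Together with $\eta\in\Ql$, this should give $c\in\Ql$, i.e. $\cos(2\pi/\ell)\in\Ql$.
\item Since $[\Ql(\cos(2\pi/\ell)):\Ql]=\varphi(\ell)/2$, this requires $\varphi(\ell)\leq2$. With $\ell\geq4$, only $\ell\in\{4,6\}$ remain.
\end{itemize}
An alternative route is Wenzl's product formula for traces of minimal projections, using ratios of $q$-numbers. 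In any case I expect this step to be the main obstacle: one must choose a projection whose trace genuinely separates $c$ from $\eta$, and handle the degenerate cases where that trace vanishes, such as small $k$ where the projection is zero under $\mu$.

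\textbf{Evaluation and the constraint on $d$.} Evaluating \eqref{eq:eta-kl}:
\begin{itemize}
\item For $\ell=4$ one gets $\eta_{4,2}=\tfrac12$ and $\eta_{4,3}=1$, so only $\eta=\tfrac12$ survives, with $q=\pm i$.
\item For $\ell=6$ one gets $\eta_{6,2}=\tfrac13$, $\eta_{6,3}=\tfrac12$, $\eta_{6,4}=\tfrac23$ and $\eta_{6,5}=1$, so $\eta\in\{\tfrac13,\tfrac12,\tfrac23\}$ with $q=e^{\pm i\pi/3}$.
\end{itemize}
Finally, rationality with $n=2$ and $p=e_1$ gives $d^2\eta\in\Nl$. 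For $\eta=\tfrac12$ this means $2\mid d^2$, hence $2\mid d$. For $\eta\in\{\tfrac13,\tfrac23\}$ it means $3\mid d^2$, hence $3\mid d$. These are exactly the eight families listed in the theorem.
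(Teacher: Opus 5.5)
Your proposal is correct and is essentially the paper's proof: the paper also uses the projection $e_1\wedge e_2$ (its $P_3^R$), computing its trace $\eta\bigl(1-\alpha_2(1-\eta)\bigr)=\eta(\eta-c)/(1-c)$ via the Fredenhagen--Rehren--Schroer recursion and $\phi$ rather than directly from $e_1\wedge e_2=(e_1e_2e_1-c\,e_1)/(1-c)$. From this rationality it deduces $\cos^2\frac{\pi}{\ell}\in\Ql$, hence $\ell\in\{4,6\}$, then reads off the $\eta_{\ell,k}$ and the divisibility of $d$ exactly as you do. The degeneracy you worry about does not occur: with $t:=\mu(e_1\wedge e_2)\in\Ql$ you get $c=(\eta^2-t)/(\eta-t)$, and $\eta=t$ would force $\eta\in\{0,1\}$.
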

\begin{proof}
    We first show that $\CR_q$ is empty unless $q\in\{\pm i,e^{\pm i\pi/3}\}$. Let $R\in\R_q$. Since $R$ defines a non-trivial $C^*$-representation of $H_\infty(q)$, we have $q=e^{\pm 2\pi i/\ell}$ for some $\ell\in\Nl,\ell\geq4$ by Thm.~\ref{thm:Wenzl}~\ref{item:WenzlSpectrum}. By Lemma~\ref{lemma:RMarkov} and Thm.~\ref{thm:Wenzl}~\ref{item:WenzlTraces} we conclude that there exists $k\in\{1,\ldots,\ell-1\}$ such that $\tau_R(e_1)=\eta_{\ell,k}$ \eqref{eq:eta-kl}.

    The idea of the following argument is to exploit the rationality property (Proposition~\ref{proposition:tauRelementary}~\ref{item:tauRrational}) of $\tau_R$: If $\rho_R(p)$ is an orthogonal projection for some $p\in H_\infty(q)$, then there exists $n\in\Nl$ such that $\rho_R(p)\in\End V\tp{n}$ and therefore $\tau_R(p)=\tau(\rho_R(p))=(\dim R)^{-n}\Tr_{V\tp{n}}(\rho_R(p))$ is {\em rational}. In particular, $\eta_{\ell,k}=\tau_R(e_1)$, the normalized trace of a spectral projection of $R$, must be rational.
    
    We first note that the two extreme values $k=1$ and $k=\ell-1$ yield $\tau_R(e_1)=\eta_{\ell,1}=0$ and $\tau_R(e_1)=\eta_{\ell,\ell-1}=1$, corresponding to $R=q\id_{V\ot V}$ and $R=-\id_{V\ot V}$, respectively. Since we require $\sigma(R)=\{-1,q\}$, these values of $k$ are ruled out.

    To best analyse the remaining cases, we have to exhibit suitable projections in $\rho_R(H_\infty(q))$. Similar to the analysis of Fredenhagen, Rehren, Schroer \cite[Prop.~3.1]{FredenhagenRehrenSchroer:1989}, we consider the orthogonal projections $P^R_1:=1$,
    \begin{align}
        P^R_{n+1}:=\rho_R(e_1)\wedge\rho_R(e_2)\wedge\ldots\wedge\rho_R(e_n),\qquad n\in\Nl.
    \end{align}
    In \cite{FredenhagenRehrenSchroer:1989}, a recursion relation for these projections is proven which adapted to our Yang-Baxter setting reads, $1\leq n\leq \ell-2$,
    \begin{align*}
    P_{n+1}^R
    &=
    \varphi(P_n^R)-\alpha_n\cdot \varphi(P_n^R)\rho_R(e_1^\perp)\varphi(P_n^R)
    ,\qquad \alpha_n:=\frac{2\cos\frac{\pi}{\ell}\,\sin\frac{n\pi}{\ell}}{\sin\frac{(n+1)\pi}{\ell}}
    ,\\
    P_\ell^R&=
    \varphi(P_{\ell-1}^R).
    \end{align*}
    Since $\tau_R$ is a Markov trace, we have $\phi(R)=\tau(R)1$ by Prop.~\ref{prop:markov-comm} and therefore $\phi(\rho_R(e_1^\perp))=\phi(1-\frac{q-R}{q+1})=1-\eta_{\ell,k}=\tau_R(e_1^\perp)$. Applying $\phi$ to the recursion relation then gives
    \begin{align*}
        \phi(P^R_{n+1})
        &=
        (1-\alpha_n\,\tau_R(e_1^\perp))P_n^R,\qquad 1\leq n\leq\ell-2.
    \end{align*}
    As $\phi$ leaves $\tau$ invariant, we find
    \begin{align*}
        \tau(P^R_{n+1})
        =
        (1-\alpha_n\tau_R(e_1^\perp))\tau(P^R_n).
    \end{align*}
    Since $\eta_{\ell,k}=\tau_R(e_1)$ and $\tau(P_n^R)$, $\tau(P_{n+1}^R)$ are rational, also the coefficients $\alpha_n$ are rational. But 
    \begin{align*}
        \frac{1}{\alpha_2}=1-\frac{1}{4 \cos^2\frac{\pi}{\ell}},
    \end{align*}
    and we see that the Yang-Baxter property requires $\cos^2\frac{\pi}{\ell}\in\Ql$. For $\ell\in\Nl$, $\ell\geq4$, this is the case exactly for $\ell=4$ (with $\cos^2\frac{\pi}{4}=\frac{1}{2}$ and $q=e^{\pm 2\pi i/\ell}=\pm i$) and $\ell=6$ (with $\cos^2\frac{\pi}{6}=\frac{3}{4}$ and $q=e^{\pm i\pi/3}$).
    
    For $\ell=4$, the only value of $k$ in $\{2,\ldots,\ell-2\}$ is $k=2$, with $\eta_{4,2}=\frac{1}{2}$. For $\ell=6$, we have the three cases $k\in\{2,3,4\}$ that lead to $\eta_{6,2}=\frac{1}{3}$, $\eta_{6,3}=\frac{1}{2}$ and $\eta_{6,4}=\frac{2}{3}$.

    This proves the claimed restrictions on the possible combination of values for $q$ and $\eta$, and it remains to show the restrictions on the dimensions $d$. Let $r$ denotes the mutliplicity of the eigenvalue $-1$ of $R$, and $d=\dim R$. Then $\tau_R(e_1)=\frac{r}{d^2}$, and we see that $\tau_R(e_1)=\frac{1}{2}$ can only occur if $\dim R$ is even, whereas $\tau_R(e_1)\in\{\frac{1}{3},\frac{2}{3}\}$ can only occur if $\dim R$ is a multiple of 3.
\end{proof}

To compare this result with other results in the literature, we recall that the \emph{Temperley-Lieb algebra}~$\TL_\infty(q)$ is the quotient of $H_\infty(q)$ by the ${}^*$-ideal generated by the relations $e_ke_{k+1}e_k=\frac{q}{(1+q)^2}e_k$, $k\in\Nl$, known as the . Thus in $\TL_\infty(q)$, both sides of the Hecke relation \eqref{eq:Hecke-e} vanish. We will say that $R\in\CR_q$ is of \emph{Temperley-Lieb type} if its representation $\rho_R$ factors through $\TL_\infty(q)$, i.e. if its spectral projection of eigenvalue $-1$ satisfies
\begin{align}\label{eq:TL}
    P_1P_2P_1=
    \frac{q}{(1+q)^2}P_1.
\end{align}
As the Hecke and Temperley-Lieb relations only involve the projection $P$, we will also speak of a \emph{Hecke projection} and \emph{Temperley-Lieb} projection, respectively.

\bigskip

Parts of the classification in Theorem~\ref{thm:class1} were already known before. In the special case of dimension $\dim R=2$, Conti and Pinzari have shown that $\R_q$ is empty unless $q=\pm i$ or $\re(q)\geq\frac{1}{2}$ \cite[Prop.~9.10]{ContiPinzari:1996}. In case $q=i$, they have also shown that $R$ must necessarily be of Temperley-Lieb type. There is also a related result of Bytsko \cite[Thm.~1]{Bytsko:2019}. In our notation, he showed that under the assumption that $R$ is a non-trivial unitary R-matrix of Temperley-Lieb type, either $q=\pm i$ and $\tau_R(e_1)=\frac{1}{2}$, or $q=e^{\pm i\pi/3}$ and $\tau_R(e_1)=\frac{1}{3}$.

It is therefore of interest to know when a Hecke R-matrix is of Temperley-Lieb type, and we list a few equivalent characterizations.

\begin{lemma}\label{lemma:TL}
    Let $R\in\CR_q$, $q\neq\pm1$, and $R=-P+q(1-P)$ is spectral decomposition. Then the following are equivalent:
    \begin{enumerate}
        \item $P$ is a Temperley-Lieb projection, i.e. $P_1P_2P_1=\frac{q}{(1+q)^2}P_1$.
        \item\label{item:TLtrace} $\tau(P)=\frac{q}{(1+q)^2}$.
        \item $P_1\wedge P_2=0$.
        \item There exist $a,b\in\Cl$ such that $\tau((P_1P_2)^n)=ab^n$ for all $n\in\Nl$. (In this case, $a=b=\frac{q}{(1+q)^2}$.)
    \end{enumerate}
\end{lemma}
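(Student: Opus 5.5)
The plan is to reduce all four conditions to statements about the single Hecke-algebra element
\begin{align}
    X:=P_1P_2P_1-cP_1=P_2P_1P_2-cP_2,\qquad c:=\frac{q}{(1+q)^2}.
\end{align}
The two expressions for $X$ agree by the Hecke relation \eqref{eq:Hecke-e}. For $q=e^{i\theta}\in\bT$ we have $c=\frac{1}{4\cos^2(\theta/2)}>0$, so $X$ is self-adjoint. By Theorem~\ref{thm:Wenzl}, $q=e^{\pm2\pi i/\ell}$ with $\ell\geq4$, hence $c\neq1$; this is the only place Wenzl's result enters. Throughout, write $\eta:=\tau(P)$. Lemma~\ref{lemma:RMarkov} says $\tau_R$ is Markov, and the Markov property is linear in $b_1$, so it also holds with $b_1$ replaced by $e_1$. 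This gives $\tau(P_1P_2)=\tau(P_1\varphi(P))=\eta^2$.

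First I record the algebraic facts about $X$. From the two expressions for $X$ one reads off
\begin{align}
    XP_1=P_1X=X,\qquad XP_2=P_2X=X,
\end{align}
and from these $X^2=X(P_1P_2P_1)-cXP_1=(1-c)X$. Hence $X/(1-c)$ is an orthogonal projection dominated by both $P_1$ and $P_2$, so $X\leq(1-c)(P_1\wedge P_2)$. Conversely, on the range of $P_1\wedge P_2$ the operator $X$ acts as $1-c$. Therefore
\begin{align}
    X=(1-c)\,(P_1\wedge P_2).
\end{align}
This identification is the step I expect to need the most care; I would verify it through the range inclusions just described rather than by citing the antisymmetrizer literature. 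Once it holds, (i)$\Leftrightarrow$(iii) is immediate, since (i) says exactly $X=0$ and $1-c\neq0$.

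For (ii)$\Leftrightarrow$(iii) I take traces: $\tau(X)=\tau(P_1P_2)-c\eta=\eta(\eta-c)$, while also $\tau(X)=(1-c)\tau(P_1\wedge P_2)$. By faithfulness of $\tau$ (Proposition~\ref{proposition:tauRelementary}), $\tau(P_1\wedge P_2)=0$ holds if and only if $P_1\wedge P_2=0$. Since $\eta\in(0,1)$ is nonzero, this is equivalent to $\eta=c$.

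For (iv) I use $(P_1P_2)^{n+1}=(X+cP_1)P_2(P_1P_2)^{n-1}=X+c\,(P_1P_2)^n$, which follows from $XP_2=X$ and $XP_1=X$. Setting $t_n:=\tau((P_1P_2)^n)$, this gives $t_1=\eta^2$ and $t_{n+1}=\tau(X)+c\,t_n$. Solving,
\begin{align}
    t_n=A+Bc^n,\qquad A=\frac{\tau(X)}{1-c}.
\end{align}
If (i) holds then $X=0$, so $t_n=c^{n-1}\eta^2=c^{n+1}$ using $\eta=c$ from (ii); this is (iv) with $a=b=c$. Conversely, suppose $t_n=ab^n$ for all $n$ and $\tau(X)\neq0$. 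Because $c\neq1$, a sequence of the form $A+Bc^n$ can be geometric only if $B=0$ or $A=0$. The case $B=0$ would force $\eta^2=A=\eta(\eta-c)/(1-c)$, i.e.\ $c(1-\eta)=0$, which is impossible. Hence $\tau(X)=0$, which is (ii), and we have already shown this is equivalent to (i).
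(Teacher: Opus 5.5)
Your proof is correct, but it is built around a different key step from the paper's. The paper never identifies $X=P_1P_2P_1-cP_1$ as an operator. It proves (i)$\Leftrightarrow$(ii) from the trace computation $\tau(X^*X)=\tau(P)(1-c)(\tau(P)-c)$ plus faithfulness. It proves (i)$\Rightarrow$(iii) from $\tau((P_1P_2)^{n+1})=c^{n+2}\to0$ together with the convergence of $\tau((P_1P_2)^n)$ to $\tau(P_1\wedge P_2)$. It proves (iii)$\Rightarrow$(i) by the range argument $P_1Xv=P_2Xv=Xv$, which is one half of your identification.

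Your relation $X^2=(1-c)X$ is the operator-level form of the paper's trace computation: tracing it gives exactly $\tau(X^*X)=(1-c)\eta(\eta-c)$. Adding the reverse range inclusion upgrades it to $X=(1-c)(P_1\wedge P_2)$. This makes (i)$\Leftrightarrow$(iii) and (ii)$\Leftrightarrow$(iii) immediate and removes the alternating-projection limit, so you only need $c\neq1$ rather than $|c|<1$. It also yields the formula $\tau(P_1\wedge P_2)=\eta(\eta-c)/(1-c)$, hence $\eta\geq c$ for every Hecke R-matrix, which the paper does not record.

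For (iv) both proofs rest on the same recursion. Your remark that $A+Bc^n$ is geometric only if $AB=0$ makes explicit what the paper calls a ``short calculation''.

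Two small points. First, your inequality $X\leq(1-c)(P_1\wedge P_2)$ should be read as $X/(1-c)\leq P_1\wedge P_2$; as written it needs $c<1$, which Wenzl's theorem does supply since $c\leq\frac12$. Second, the parenthetical claim $a=b=c$ needs the one-line observation that $ab^n=c^{n+1}$ for all $n$ forces $b=c=a$.

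You could also dispense with Wenzl's theorem entirely. If $c=1$, then $X^2=0$ with $X=X^*$ gives $X=0$, i.e. $P_1\leq P_2$ and $P_2\leq P_1$. This contradicts $P_1\neq P_2$.
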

\begin{proof}
    We write $x:=\frac{q}{(1+q)^2}$ as a shorthand notation throughout the proof. Recall $0<x<1$.

    $a)\Leftrightarrow b)$ The Temperley-Lieb condition is equivalent to
    \begin{align*}
        0&=
        \tau((P_1P_2P_1-xP_1)^*(P_1P_2P_1-xP_1))
        \\
        &=
        \tau(P_1P_2P_1P_2P_1)-2x\tau(P_1P_2P_1)+x^2\tau(P)
        \\
        &=
        \tau(P_1P_2P_1P_2)-2x\tau(P_1P_2)+x^2\tau(P)
        \\
        &=
        \tau(P_2P_1P_2)
        -x\tau(P)+x\tau(P)^2
        -2x\tau(P)^2+x^2\tau(P)
        \\
        &=
        \tau(P)\left(1-x\right)\left(\tau(P)-x\right).
    \end{align*}
    As $P\neq0$ and $x\neq1$, this is equivalent to b).

    $a)\Rightarrow c)$ If the Temperley-Lieb condition holds, one has $\tau((P_1P_2)^{n+1})=x\tau((P_1P_2)^n)$ and hence $\tau((P_1P_2)^{n+1})=x^n\tau(P_1P_2)=x^{n+2}$. In the limit $n\to\infty$, the right hand side vanishes, and the left hand side converges to $\tau(P_1\wedge P_2)$, so that c) follows from $\tau$ being faithful.

    $c)\Rightarrow a)$ Let $v\in V\tp{3}$, and $w:=(P_1P_2P_1-xP_1)v$. Then clearly $P_1w=w$, and
    \begin{align*}
        P_2w
        &=
        (P_2P_1P_2P_1-xP_2P_1)v
        \\
        &=
        (P_1P_2P_1-xP_1+xP_2P_1-xP_2P_1)v
        =w.
    \end{align*}
    Hence $w$ lies in the range of $P_1\wedge P_2$ and is therefore $0$ by c). This implies $P_1P_2P_1=xP_1$.

    $b)\Leftrightarrow d)$ For a Hecke solution $P$ we calculate with the help of the Hecke relations and the Markov property of $\tau$
    \begin{align}\label{recurs1}
        \tau((P_1P_2)^n) &= \tau(P)\left(x\frac{1-x^{n-1}}{1-x}(\tau(P)-1)+\tau(P)\right).
    \end{align}
    For $P$ Temperley-Lieb, one has $\tau((P_1P_2)^n)=x^{n+1}$ as in the proof of $a)\Rightarrow c)$. Hence $b)$ implies $d)$. On the other hand, if $d)$ holds, i.e. $\tau((P_1P_2)^n)=ab^n$, then we see from \eqref{recurs1} $b=x$ and after a short calculation $x=\tau(P)$, i.e. $b)$.
\end{proof}

In agreement with the results of Bytsko and Conti/Pinzari, this Temperley-Lieb property is satisfied in two of the four cases listed in Prop.~\ref{thm:class1}: For $q=\pm i$ and $\tau_R(e_1)=\frac{1}{2}$, and for $q=e^{\pm i\pi/3}$ and $\tau_R(e_1)=\frac{1}{3}$, property~\ref{item:TLtrace} of Lemma~\ref{lemma:TL} is satisfied,
\begin{align}
    \frac{\pm i}{(1\pm i)^2}=\frac12,\qquad
    \frac{e^{\pm i\pi/3}}{(1+e^{\pm i\pi/3})^2}
    =
    \frac{1}{4\cos^2\frac\pi6}=\frac13.
\end{align}
For the class $\left[e^{\pm i\pi/3}, \frac{2}{3},d\right]$, we have $\frac{q}{(1+q)^2}=1-\tau_R(e_1)\neq\tau_R(e_1)$, so it is not of Temperley-Lieb type. This case can be understood as follows. On $\R_q$, we have an involutive symmetry operation that exchanges the two spectral projections of $R$. If $P$ denotes the spectral projection of $R$ corresponding to eigenvalue $-1$, then $R=-P+q(1-P)$, and
\begin{align}\label{eq:R'} 
    R':=-(1-P)+qP
\end{align}
also lies in $\R_q$: Clearly $R'$ is unitary with eigenvalues $-1$ and $q$, and the fact that it solves the Yang-Baxter equation is a consequence of the Hecke algebra relations being invariant under $e_k\leftrightarrow e_k^\perp$. On the level of equivalence classes, this implies
\begin{align}
    [q,\eta,d]'=[q,1-\eta,d],
\end{align}
so we realize that $[e^{\pm i\pi/3}, \frac{2}{3},d]$ is to the ``flipped version'' of the Temperley-Lieb class $[e^{\pm i\pi/3}, \frac{2}{3},d]$. The latter class if not of Temperley-Lieb type because the flip symmetry preserves the Hecke relations, but not necessarily the Temperley-Lieb relations.

Similarly, we note that with an R-matrix $R\in[q,\eta,d]$, its adjoint $R^*$ lies in the class
\begin{align}
    [q,\eta,d]^*=[\overline q,\eta,d].
\end{align}
This observation reduces the task of finding representatives for the classes listed in Theorem~\ref{thm:class1} to the eigenvalues $q=i$ and $q=e^{i\pi/3}$, i.e. out of the eight families listed in Theorem~\ref{thm:class1}, we may restrict without loss of generality to the three families
\begin{align}
    \left[i,\tfrac12,2d\right],\qquad
    \left[e^{i\pi/3},\tfrac13,3d\right],
    \qquad
    \left[e^{i\pi/3},\tfrac12,2d\right]
\end{align}
(with varying dimension $d\in\Nl$).

\medskip

In the following proposition, we make use of the Gaussian R-matrices $G_d$ from Lemma~\ref{lemma:Gauss} and the tensor product $R\boxtimes S\in\CR(V\ot W)$ of R-matrices $R\in\CR(V)$, $S\in\CR(W)$. It is defined as $R\otimes S$ up to reshuffling of the tensor factors from $V\ot V\ot W\ot W$ to $(V\ot W)\ot (V\ot W)$, see for example \cite[(4.15)]{LechnerPennigWood:2019}. As $R\boxtimes S$ is unitarily equivalent to $R\otimes S$, we have $\dim(R\boxtimes S)=\dim R\cdot\dim S$ and $\sigma(R\boxtimes S)=\{\la\mu\col\la\in\sigma(R),\mu\in\sigma(S)\}$.

We write $1_k:=\id_{\Cl^k\otimes\Cl^k}$ as shorthand notation.

\begin{proposition}\label{thm:unfinished}
	Let $d\in\Nl$. Then
	\begin{align}
        -e^{-i\pi/4}\,G_2\boxtimes1_d\in[i,\tfrac12,2d],\qquad\qquad
        i\,G_3\boxtimes1_d\in[e^{i\pi/3},\tfrac13,3d].
	\end{align}
\end{proposition}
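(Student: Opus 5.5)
Since both expressions are R-matrices, the plan is to verify the three labels $(q,\eta,d)$ of the class from Theorem~\ref{thm:class1}: the spectrum must be exactly $\{-1,q\}$, the normalized trace of the $(-1)$-spectral projection must be $\eta$, and the dimension must be as claimed. By the Corollary preceding the label \eqref{eq:label}, these data determine the equivalence class.

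First I check that the candidates lie in $\CR$. The Yang-Baxter equation is homogeneous, so scalar multiples of R-matrices are R-matrices. The tensor product $R\boxtimes S$ of R-matrices is again an R-matrix, and $1_d$ is a (trivial) unitary R-matrix. Hence $-e^{-i\pi/4}G_2\boxtimes 1_d$ and $iG_3\boxtimes 1_d$ are unitary R-matrices of dimensions $2d$ and $3d$, respectively, using $\dim(R\boxtimes S)=\dim R\cdot\dim S$.

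Next I compute the spectra with Lemma~\ref{lemma:Gauss} and $\sigma(R\boxtimes S)=\sigma(R)\sigma(S)$, noting $\sigma(1_d)=\{1\}$. For $d=2$, $\sigma(G_2)=\{e^{i\pi/4},e^{-i\pi/4}\}$, so multiplying by $-e^{-i\pi/4}$ gives $\{-1,-e^{-i\pi/2}\}=\{-1,i\}$. For $d=3$, $\sigma(G_3)=\{i,e^{-i\pi/6}\}$, and multiplying by $i$ gives $\{-1,e^{i\pi/3}\}$. Both spectra are therefore of the required form $\{-1,q\}$ with $q=i$ and $q=e^{i\pi/3}$.

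Then I determine $\eta=\tau(P)$. The quickest route uses the trace formula from the proof of the Corollary, $\eta=\frac{1}{1+q}\bigl(q-\tau(R)\bigr)$, where $\tau(R)=\Tr(R)/(\dim R)^2$. Because $\tau$ factorizes over $\boxtimes$, we have $\tau(cG\boxtimes 1_d)=c\,\tau(G)$. Lemma~\ref{lemma:Gauss}~\ref{item:GaussianRPartialTrace} gives $\phi(G_k)=1/\sqrt{k}$, hence $\tau(G_k)=1/\sqrt{k}$.
\begin{itemize}
\item For $k=2$: $\tau(R)=-e^{-i\pi/4}/\sqrt2=-(1-i)/2$, so $\eta=\frac{i+(1-i)/2}{1+i}=\frac{(1+i)/2}{1+i}=\frac12$.
\item For $k=3$: $\tau(R)=i/\sqrt3$. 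With $q=e^{i\pi/3}=\frac12+\frac{\sqrt3}{2}i$, we get $q-i/\sqrt3=\frac12+\frac{i}{2\sqrt3}$ and $1+q=\frac32+\frac{\sqrt3}{2}i=\sqrt3\bigl(\frac{\sqrt3}{2}+\frac i2\bigr)$, whose ratio is $\frac13$.
\end{itemize}
As a cross-check, one can count multiplicities directly: the eigenvalue multiplicities of $G_k$ appear as the counts of $l$ in the eigenvalue formula of Lemma~\ref{lemma:Gauss}, multiplied by the multiplicities of the projections $P_l$ of $U$. The tensor factor $1_d$ leaves the normalized trace unchanged.

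The main obstacle is bookkeeping rather than conceptual: one must make sure the scalar normalization maps the specific Gauss-sum eigenvalues onto $\{-1,q\}$ with the correct assignment of which eigenvalue becomes $-1$. Computing $\eta$ through $\tau(R)$ avoids having to determine the multiplicities of the $P_l$ explicitly. With $q$, $\eta$ and the dimension established, the labelling \eqref{eq:label} places the two matrices in $[i,\frac12,2d]$ and $[e^{i\pi/3},\frac13,3d]$, respectively.
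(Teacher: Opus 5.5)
Your proposal is correct and follows the paper's proof. Both arguments read off the spectrum $\{-1,q\}$ from Lemma~\ref{lemma:Gauss} and obtain $\eta=\tau(P)$ from $R=q-(1+q)P$ together with $\tau(G_k)=\phi(G_k)=1/\sqrt{k}$. The only cosmetic difference is ordering: the paper first places the untensored matrices in $[i,\frac12,2]$ and $[e^{i\pi/3},\frac13,3]$ and then tensors with $1_d$, while you compute directly for $cG_k\boxtimes 1_d$ using multiplicativity of the spectrum and of $\tau$.
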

\begin{proof}
    Consider the R-matrix $R:=-e^{-i\pi/4}G_2$. According to Lemma~\ref{lemma:Gauss}~\ref{item:GaussianRSpectrum}, $\sigma(R)=\{-1,i\}$, i.e. $R$ is a Hecke R-matrix with $q=i$. Denoting one spectral projection of $G_2$ by $P$, we have $G_2=e^{i\pi/4}P+e^{-i\pi/4}(1-P)$ and hence $R=i-(1+i)P$. Using Lemma~\ref{lemma:Gauss}~\ref{item:GaussianRPartialTrace}, this yields
    \begin{align*}
        \tau_R(e_1)=\tau(P)=\frac{1}{1+i}\left(i+e^{-i\pi/4}\frac{1}{\sqrt{2}}\right)=\frac{1}{2},
    \end{align*}
    and we conclude $R\in[i,\frac12,2]$. By tensoring with the identity in dimension $d$ we obtain $R\boxtimes1_d\in[i,\frac12,2d]$.

    Similarly, the R-matrix $S:=iG_3$ has spectrum $\sigma(S)=\{-1,e^{i\pi/3}\}$ according to Lemma~\ref{lemma:Gauss}~\ref{item:GaussianRSpectrum}, i.e. $S$ is a Hecke R-matrix with $q=e^{i\pi/3}$. Denoting one spectral projection of $G_3$ by $P$, we have $G_3=iP+e^{-i\pi/6}(1-P)$ and hence $S=e^{i\pi/3}-(1+e^{i\pi/3})P$. Using Lemma~\ref{lemma:Gauss}~\ref{item:GaussianRPartialTrace}, this yields
    \begin{align*}
        \tau_S(e_1)=\tau(P)=\frac{1}{1+e^{i\pi/3}}\left(e^{i\pi/3}-\frac{i}{\sqrt{3}}\right)=\frac{1}{3}.
    \end{align*}
    Again we conclude $S\in[e^{i\pi/3},\frac13,3]$ and $S\boxtimes1_d\in[e^{i\pi/3},\frac13,3d]$.
\end{proof}

The results presented so far almost provide a complete classification of all unitary R-matrices with two eigenvalues, up to the class $[e^{i\pi/3},\frac12,2d]$. It is currently not known to us whether $[e^{i\pi/3},\frac12,2d]$ is empty (for some $d$) or not. The question of existence of this R-matrix has also been raised in \cite[Sect.~5.6]{GalindoHongRowell:2011}.

It should be noted that Wenzl has constructed a $C^*$-representation of $H_\infty(e^{i\pi/3})$ with Markov trace $\mu$ satisfying $\mu(e_1)=\frac12$. This trace has all the properties listed in Proposition~\ref{proposition:tauRelementary}, including its rationality (with $d=2$). It is however not clear whether it can be represented in the tensor product form required by an R-matrix.

We also note that for $d=1$, the class $[e^{i\pi/3},\frac{1}{2},2d]$ is empty. This can be seen from the list of equivalence classes of two-dimensional R-matrices provided in Proposition~\ref{proposition:2d}: $R_1$ has only one eigenvalue, and $R_2$, $R_3$ have an opposite pair of eigenvalues. Hence $R_1,R_2,R_3$ are not of Hecke type. $R_4$ is a multiple of $G_2^*$, in particular its negative eigenvalue ratio is $-i\neq e^{\pm i\pi/3}$.

If $R\in[e^{i\pi/3},\frac{1}{2},2d]$ exists, its Yang-Baxter endomorphism \cite{ContiLechner:2019} would have index $4$ and would be standard braided \cite{ContiFidaleo:2000}.

\subsubsection*{Acknowledgements}

I would like to thank Roberto Conti, Hans Wenzl, and Eric Rowell for interesting discussions about R-matrices and subfactors. Also the financial support by the German Research Foundation DFG through the Heisenberg Grant ``Quantum Fields and Operator Algebras'' (LE 2222/3-1) is gratefully acknowledged.

\setlength{\biblabelsep}{0.4em}
\AtNextBibliography{\small}
\newrefcontext[sorting=anyt]
\setlength\bibitemsep{0.25\itemsep}
\printbibliography

\end{document}